\newtheorem{thmx}{Theorem}
\newtheorem{theorem}{Theorem}[section]
\newtheorem{cor}[theorem]{Corollary}
\newtheorem{proposition}[theorem]{Proposition}
\newtheorem{lemma}[theorem]{Lemma}
\newtheorem{remark}[theorem]{Remark}
\def\irr#1{{\rm Irr}(#1)}
\def\irrr#1#2 {\irr {#1 \mid #2}}
\newcommand{\R}{\mathbb R}
\newcommand{\sfe}{{{\mathbb S}^{n-1}}}
\newcommand{\E}{\mathbb E}
\begin{document}

\title[A universal bound in the dimensional Brunn-Minkowski inequality]{A universal bound in the dimensional Brunn-Minkowski inequality for log-concave measures}
\author[Galyna V. Livshyts]{Galyna V. Livshyts}
\address{School of Mathematics, Georgia Institute of Technology, Atlanta, GA} \email{glivshyts6@math.gatech.edu}

\subjclass[2010]{Primary: 52} 
\keywords{Gaussian measure, Ehrhard inequality, Convex bodies, symmetry, Brunn-Minkowski inequality, Brascamp-Lieb inequality, Poincar\'e inequality, equality case characterization, torsional rigidity, energy minimization, log-concave measures}
\date{\today}
\begin{abstract} We show that for any even log-concave probability measure $\mu$ on $\R^n$, any pair of symmetric convex sets $K$ and $L$, and any $\lambda\in [0,1],$
$$\mu((1-\lambda) K+\lambda L)^{c_n}\geq (1-\lambda) \mu(K)^{c_n}+\lambda\mu(L)^{c_n},$$
where $c_n\geq n^{-4-o(1)}.$ This constitutes progress towards the dimensional Brunn-Minkowski conjecture (see Gardner, Zvavitch \cite{GZ}, Colesanti, L, Marsiglietti \cite{CLM}). Moreover, our bound improves for various special classes of log-concave measures.
\end{abstract}
\maketitle

\section{Introduction}

Recall that a measure $\mu$ on $\R^n$ is called log-concave if for all Borel sets $K, L$, and for any $\lambda\in [0,1],$
\begin{equation}\label{log-concave}
\mu(\lambda K+(1-\lambda)L)\geq \mu(K)^{\lambda}\mu(L)^{1-\lambda}.
\end{equation}
Throughout this paper, the measures are usually assumed to be probability measures, i.e. they are such that $\mu(\R^n)=1.$

In accordance with Borell's result \cite{bor}, if a measure $\mu$ has density $e^{-V(x)}$, where $V(x)$ is a convex function on $\R^n$ which is finite on the a set with non-empty interior, then $\mu$ is log-concave. Examples of log-concave measures include Lebesgue volume $|\cdot |$ and the Gaussian measure $\gamma$ with density $(2\pi)^{-n/2} e^{-{x^2}/{2}}$.

A notable partial case of Borell's theorem is the Brunn-Minkowski inequality, proved in the full generality by Lusternik \cite{Lust}, which states:
\begin{equation}\label{BM}
|\lambda K+(1-\lambda)L|\geq |K|^{\lambda}|L|^{1-\lambda},
\end{equation}
which holds for all Borel-measurable sets $K, L$ and any $\lambda\in [0,1]$ (note that Minkowski average of Borel-measurable sets is also necessarily Borel-measurable). Furthermore, due to the $n-$homogeneity of Lebesgue measure, (\ref{BM}) self-improves to an a-priori stronger form
\begin{equation}\label{BM-add}
|\lambda K+(1-\lambda)L|^{\frac{1}{n}}\geq \lambda |K|^{\frac{1}{n}}+(1-\lambda)|L|^{\frac{1}{n}}.
\end{equation}
See an extensive survey by Gardner \cite{Gar} on the subject for more information. 

Gardner and Zvavitch \cite{GZ} and Colesanti, L, Marsiglietti \cite{CFM} conjectured that any even log-concave probability measure $\mu$ enjoys the inequality
\begin{equation}\label{gzconj}
\mu(\lambda K+(1-\lambda)L)^{\frac{1}{n}}\geq \lambda \mu(K)^{\frac{1}{n}}+(1-\lambda)\mu(L)^{\frac{1}{n}},
\end{equation}
for any pair of convex symmetric sets $K$ and $L.$

L, Marsiglietti, Nayar and Zvavitch \cite{LMNZ} showed that this conjecture follows from the celebrated Log-Brunn-Minkowski conjecture of B\"or\"oczky, Lutwak, Yang, Zhang \cite{BLYZ} (see also \cite{BLYZ-1} and \cite{BLYZ-2}, and Milman \cite{Eman1}, \cite{Eman2}); a combination of this result with the results of Saroglou \cite{christos}, \cite{christos1}, confirms (\ref{gzconj}) for unconditional convex bodies and unconditional log-concave probability measures. For rotation-invariant measures, this conjecture was verified locally near any ball by Colesanti, L, Marsiglietti \cite{CLM}. Kolesnikov, L \cite{KolLiv} developed an approach to this question, building up on the past works of Kolesnikov and Milman \cite{KM1}, \cite{KM2}, \cite{KolMil-1}, 
\cite{KolMilsupernew}, as well as \cite{CLM}, and showed that in the case of the Gaussian measure, for convex sets containing the origin, the desired inequality holds with power $1/2n;$ this is curious, because earlier, Nayar and Tkocz \cite{NaTk} showed that only the assumption of the sets containing the origin is not sufficient for the inequality to hold with a power as strong as $1/n.$ Remarkably, Eskenazis and Moschidis \cite{EM} showed that for the Gaussian measure $\gamma$ and symmetric convex sets $K$ and $L,$ the inequality (\ref{gzconj}) does hold.

For a log-concave probability measure $\mu$ and $a\in \R^+,$ let $p(\mu,a)$ be the largest real number such that for all convex sets $K$ and $L$ with $\mu(K)\geq a$ and $\mu(L)\geq a,$ and every $\lambda\in [0,1]$ one has 
$$\mu(\lambda K+(1-\lambda)L)^{p(\mu,a)}\geq \lambda \mu(K)^{p(\mu,a)}+(1-\lambda)\mu(L)^{p(\mu,a)}.$$
Analogously, we define $p_s(\mu,a)$ as the largest number such that for all convex \emph{symmetric} sets $K$ and $L$ with $\mu(K)\geq a$ and $\mu(L)\geq a,$ and every $\lambda\in [0,1]$ one has 
$$\mu(\lambda K+(1-\lambda)L)^{p_s(\mu,a)}\geq \lambda \mu(K)^{p_s(\mu,a)}+(1-\lambda)\mu(L)^{p_s(\mu,a)}.$$

\medskip
\medskip 


First, we obtain a lower estimate for $p_s(\mu,a)$ for all even log-concave probability measures, which constitutes progress towards the dimensional Brunn-Minkowski conjecture:

\begin{thmx}\label{log-concave-general}
For each $n\geq 1$ there exists a number $c_n>0$ such that for any even log-concave probability measure $\mu$ on $\R^n$, for all symmetric convex sets $K$ and $L$, and any $\lambda\in [0,1],$ one has
$$\mu(\lambda K+(1-\lambda)L)^{c_n}\geq \lambda \mu(K)^{c_n}+(1-\lambda)\mu(L)^{c_n},$$
Namely, we get $c_n=n^{-4-o(1)}$, where $o(1)$ is a positive number which tends to zero as $n\rightarrow\infty,$ and is bounded from above by an absolute constant (that is, by a constant independent of the dimension).
\end{thmx}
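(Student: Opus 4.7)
The plan is to follow the local-to-global strategy developed in the cited joint works of Colesanti--L.--Marsiglietti and Kolesnikov--L. By standard approximation arguments, it suffices to treat measures $\mu=e^{-V}dx$ with $V$ smooth, strictly convex and even, and convex bodies $K,L$ that are $C^2$-smooth and strongly convex with even support functions. For such pairs, differentiating the curve $t\mapsto \mu(K+tM)^{c_n}$ twice at $t=0$ (for $M$ an arbitrary symmetric convex body) converts the desired global inequality into an $L^2$ trace inequality on $\partial K$ for the support function of $M$, in the spirit of the Colesanti--L.--Marsiglietti trace formula.

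I would then apply the Hessian Brascamp--Lieb framework of Kolesnikov--Milman to recast the boundary inequality as a weighted Poincar\'e-type inequality in the interior of $K$: for every even $u\in C^2(K)$ with $\int_K u\,d\mu=0$,
$$\int_K u^2\, d\mu \;\leq\; \frac{1}{\lambda_n}\int_K \langle (\nabla^2 V)^{-1}\nabla u, \nabla u\rangle\, d\mu,$$
where the admissible value of $c_n$ is essentially determined by $\lambda_n$. The crucial feature of the symmetric setting is that support-function differences of symmetric bodies always yield \emph{even} test functions $u$; this eliminates the translation modes from the spectrum of the associated weighted operator, which is precisely the spectral improvement unavailable in the non-symmetric case.

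The bulk of the work is then a quantitative lower bound $\lambda_n \gtrsim n^{-4-o(1)}$ on this even-function Poincar\'e constant, uniform over all even log-concave $V$ and all symmetric convex bodies $K$. My plan is to chain: (i) an affine reduction to the isotropic position, which preserves symmetry; (ii) the Brascamp--Lieb inequality, dominating the Hessian-weighted Dirichlet form by the standard one at the cost of a factor of $\|\nabla^2 V\|$ in the isotropic normalization; (iii) current polynomial bounds on the Kannan--Lov\'asz--Simonovits / thin-shell constants for even log-concave measures; and (iv) a Payne--Weinberger-type diameter estimate on symmetric convex bodies of unit $\mu$-mass. A careful accounting of the dimensional losses in this chain yields the exponent $4$, the $o(1)$ absorbing the polylogarithmic correction coming from (iii).

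The main obstacle is the combined bookkeeping in steps (ii)--(iii): one must track $\|\nabla^2 V\|$ after the isotropic normalization and verify that each transference step respects the even-function constraint, since a careless step would implicitly reintroduce odd modes and yield only the (much weaker) non-symmetric bound. A secondary subtlety is regularity: the Brascamp--Lieb step degenerates when $V$ has flat directions, which requires a perturbation/localization argument to reduce to the strictly convex case without losing control of $\lambda_n$ in the limit.
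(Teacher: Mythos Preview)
Your overall architecture (approximation, local-to-global via second variation, affine reduction to isotropic, input of the Chen/Lee--Vempala KLS bound) matches the paper. The gap is in the displayed ``intermediate'' inequality and in the quantity you propose to track.

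The inequality you write,
\[
\int_K u^2\, d\mu \;\leq\; \frac{1}{\lambda_n}\int_K \langle (\nabla^2 V)^{-1}\nabla u, \nabla u\rangle\, d\mu
\quad\text{for even } u \text{ with } \int_K u\,d\mu=0,
\]
already holds with $\lambda_n=1$ by Brascamp--Lieb restricted to $K$; so proving $\lambda_n\ge n^{-4-o(1)}$ is vacuous and cannot be what determines $c_n$. The Kolesnikov--Milman/Kolesnikov--L.\ reduction is to a \emph{second-order} inequality: for the Neumann extension $u$ of the boundary datum one needs
\[
\frac{1}{\mu(K)}\int_K \bigl(\|\nabla^2 u\|_{HS}^2+\langle \nabla^2 V\,\nabla u,\nabla u\rangle\bigr)\,d\mu
\;\ge\; p\Bigl(\frac{1}{\mu(K)}\int_K Lu\,d\mu\Bigr)^2+\operatorname{Var}_{\mu|_K}(Lu),
\]
with $Lu=\Delta u-\langle\nabla V,\nabla u\rangle$, and $c_n$ is the admissible $p$. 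The paper handles this by choosing $u$ with $Lu\equiv\text{const}$ and proving the key estimate (its Proposition~4.2): write $u=v+\tfrac{t}{2}|x|^2$, apply the ordinary Poincar\'e inequality to each coordinate of $\nabla v$ (mean zero by evenness---this is exactly where symmetry enters), complete the square, and optimize in $t$. What emerges in the denominator is
\[
n+\frac{1}{\mu(A)}\int_A\bigl(C_{poin}(K,\mu)^2\,|\nabla V|^2-2\langle\nabla V,x\rangle\bigr)\,d\mu,
\]
so the quantity to control is $|\nabla V|$, \emph{not} $\|\nabla^2 V\|$. For a general isotropic log-concave $V$, $\|\nabla^2 V\|$ need not be bounded at all, so your step~(ii) cannot close. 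The paper instead shows (combining Klartag's sublevel-set lemma with a Klartag--E.~Milman gradient bound) that there is a symmetric convex $A$ with $\mu(A)\ge 0.9$ on which $|\nabla V|\le Cn^2$; together with $C_{poin}(K,\mu)\le n^{o(1)}$ from Chen, this gives the denominator $\le C n^{4+o(1)}$ and hence $c_n=n^{-4-o(1)}$. Your step~(iv) (a Payne--Weinberger diameter argument) does not appear and is not needed; the ``good set'' $A$ plays that role.
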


In two particular cases, we show tighter bounds. 

\begin{theorem}\label{product}
Let $p\in [1,2]$ and let $d\mu(x)=e^{-\frac{\|x\|_p}{p}}dx$, where\\ $\|x\|_p=\sqrt[p]{|x_1|^p+...+|x_n|^p}$, and $C_{n,p}$ is the normalizing constant. Then for all symmetric convex sets $K$ and $L$ and any $\lambda\in [0,1],$ one has
$$\mu(\lambda K+(1-\lambda)L)^{A(n,p)}\geq \lambda \mu(K)^{A(n,p)}+(1-\lambda)\mu(L)^{A(n,p)},$$
where
$$A(n,p)= \frac{c(p)}{n(\log n)^{\frac{2-p}{p}}},$$
and $c(p)>0$ is an absolute constant independent of the dimension.
\end{theorem}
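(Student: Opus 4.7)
The plan is to follow the same local sufficient-condition approach that is used in the proof of Theorem~\ref{log-concave-general}, reducing the dimensional Brunn-Minkowski inequality to a trace-Poincar\'e-type spectral inequality for even functions on the boundary of each symmetric convex body, and then to sharpen the resulting constant by exploiting the explicit structure of $V(x)=\|x\|_p/p$ together with the (almost-)product nature of $\mu$. Concretely, by the reduction already invoked for Theorem~A, it suffices to show that for every $C^2$-smooth symmetric convex body $K\subset\R^n$ and every even smooth function $f$ on $\partial K$ of zero $\mu$-boundary mean,
\[
\int_{\partial K} f^{2}\,e^{-V}\, d\mathcal{H}^{n-1}
\;\leq\; \frac{1}{A(n,p)}\,\mathcal{E}_K(f,f),
\]
where $\mathcal{E}_K$ is the natural Dirichlet form combining the second fundamental form of $\partial K$ with $\nabla^{2}V$. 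The task is then to improve, for this specific $V$, the general bound given by Theorem~A.

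The structural input is a direct computation of $\nabla^2 V$ for $V=\|x\|_p/p$. Because $V$ is $1$-homogeneous, $\nabla^{2}V$ vanishes in the radial direction and, on directions tangent to the $\ell^p$-sphere, is comparable to a diagonal matrix with entries of order $\|x\|_p^{-(p-1)}|x_i|^{p-2}$ minus a rank-one correction coming from $\nabla V\nabla V^{T}$. At $p=2$ this is a non-degenerate lower bound of order $1/|x|$ on the tangent space, which pairs cleanly with the concentration of $\mu$ near the Euclidean sphere of radius $\sim n^{1/2}$ to yield a spectral gap of order $1/n$. For $p\in(1,2)$ the Hessian degenerates near the coordinate axes, but this can be controlled on the typical set $\{\min_i|x_i|\gtrsim 1,\ \|x\|_p\sim n^{1/p}\}$, whose complement has negligible $\mu$-mass by a Kahane-Khintchine-type tail bound exhibiting fluctuations of $\|X\|_p$ of order $(\log n)^{(2-p)/p}$. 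The case $p=1$ is handled separately: $\mu$ is an exact product of double-sided exponentials, so one tensorizes a one-dimensional even-function Poincar\'e inequality, with an extra $\log n$ factor appearing when passing from the exponential Poincar\'e scale to the subgaussian deviation scale that the reduction requires.

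The final ingredient is an improved Brascamp-Lieb-type inequality restricted to even functions: orthogonality with the linear functionals $x\mapsto x_i$---which are the obstruction to any $n$-independent Poincar\'e constant on $\mu$---contributes an additional $1/n$ factor. Combined with the spectral lower bounds above, this yields the claimed $A(n,p)\asymp c(p)/(n(\log n)^{(2-p)/p})$.

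The main obstacle will be making the evenness gain rigorous and uniform in $K$ while simultaneously absorbing the anisotropy of $\nabla^{2}V$ for $p<2$. The ``bad'' region $\{\min_i|x_i|\ll 1\}$ has small $\mu$-mass, but the Dirichlet form $\mathcal{E}_K$ could in principle concentrate there, so the estimate must balance the two regions via an optimal cutoff threshold; that threshold is what produces the exponent $(2-p)/p$ on the logarithm. At $p=2$ the Hessian is non-degenerate on all of $\R^n\setminus\{0\}$, no balancing is needed, and the log factor correctly disappears.
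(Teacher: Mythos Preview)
Your plan diverges from the paper's argument and, more importantly, misses the two external inputs that actually produce the exponent $A(n,p)$. The paper does \emph{not} analyze $\nabla^2 V$ directly, nor does it use any cutoff of a ``bad'' region $\{\min_i|x_i|\ll 1\}$ or Kahane--Khintchine tails. Instead, it applies Proposition~\ref{prop2} with $A=K$, which reduces everything to bounding the scalar quantity
\[
n+C_{poin}^2(K,\mu_p)\,\frac{1}{\mu_p(K)}\int_K |\nabla V|^2\,d\mu_p\;-\;\frac{2}{\mu_p(K)}\int_K \langle\nabla V,x\rangle\,d\mu_p.
\]
Here $V(x)=\|x\|_p^p/p$ (the $p$ in the exponent is a typo in the statement; your reading of $V$ as $1$-homogeneous is therefore off), so $|\nabla V|^2=\|x\|_{2(p-1)}^{2(p-1)}$ and $\langle\nabla V,x\rangle=\|x\|_p^p$. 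The two decisive facts are then: (i) Barthe--Klartag's bound $C_{poin}(\mu_p,K)\le C(\log n)^{(2-p)/(2p)}$, valid uniformly over all symmetric convex $K$, and (ii) the Eskenazis--Nayar--Tkocz Gaussian-mixture correlation inequality, which (via the Barthe--Klartag trick, Lemma~\ref{unimod}) gives $\frac{1}{\mu_p(K)}\int_K \|x\|_{2(p-1)}^{2(p-1)}\,d\mu_p\le C(p)n$. These combine to make the denominator $\le C(p)\,n(\log n)^{(2-p)/p}$, and the proof finishes via Proposition~\ref{key_prop} after solving $Lu=\mathrm{const}$.

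The specific gap in your outline is that none of the mechanisms you propose---the direct spectral lower bound on $\nabla^2 V$ restricted to a typical set, the ``evenness gain'' of an extra $1/n$ from orthogonality to linear functionals, or the cutoff balancing that you say generates the exponent $(2-p)/p$---is what drives the result, and you give no argument that they would actually yield a bound uniform in $K$. In the paper the $(\log n)^{(2-p)/p}$ is exactly $C_{poin}^2$ from Barthe--Klartag, not a cutoff artifact; and the uniform-in-$K$ control of $\int_K|\nabla V|^2$ comes from the ENT correlation inequality, which your sketch does not invoke. Without those two ingredients (or genuine substitutes), the plan as written does not close.
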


In another particular case of a class of ``exponential rotation-invariant measures'', we obtain:

\begin{theorem}\label{rot-inv}
Let $d\mu(x)=e^{-\frac{|x|^{p}}{p}}dx$ and $p\geq 1$, where $|\cdot|$ stands for the Euclidean norm, and $C_{n,p}$ is the normalizing constant. Then for all symmetric convex sets $K$ and $L$ and any $\lambda\in [0,1],$ one has
$$\mu(\lambda K+(1-\lambda)L)^{C(p)n^{-2}}\geq \lambda \mu(K)^{C(p)n^{-2}}+(1-\lambda)\mu(L)^{C(p)n^{-2}},$$
where $C(p)$ depends only on $p$.

Furthermore, in the case when $p\in [1,2],$ the power in the inequality above could be taken to be $Cn^{-1-o(1)}$ in place of $Cn^{-2},$ where $o(1)$ is a positive number which tends to zero as $n\rightarrow\infty,$ and is bounded from above by an absolute constant, independent of the dimension
\end{theorem}

\begin{remark} About half a year after the present paper was posted on arXiv, Cordero-Erasquin and Rotem \cite{CERot} obtained a remarkable result which, in particular, implies Theorem \ref{rot-inv}, but none of the other theorems of the present paper.
	
\end{remark}

\medskip

In the case of Lebesgue measure $|\cdot|$, the quantity $p_s(|\cdot|,a)$ does not depend on $a,$ and the question of lower bounding $p_s(|\cdot|,a)$ is equivalent to bounding from below $\inf_{a\in\R} p(|\cdot|,a)$. However, without homogeneity, a universal bound for $\inf_a p(\mu,a)$ may not reflect the correct rate, and may not be applicable to study isoperimetric type questions. For example, in the case of the Gaussian measure $\gamma$, the Ehrhard inequality implies that $p(\gamma,a)\rightarrow_{a\rightarrow 1}\infty$, and in particular, $p_s(\gamma,a)\rightarrow_{a\rightarrow 1}\infty$ (see more at \cite{gal2}).

The convergence $p_s(\mu,a)\rightarrow\infty$ cannot be the case for all log-concave measures, because for Lebesgue measure $|\cdot|,$ we have $p(|\cdot|,a)=p_s(|\cdot|,a)=\frac{1}{n}$ for every $a\in \R;$ same goes to the restriction of the Lebesgue measure to a convex set. However, the phenomenon $p_s(\mu,a)\rightarrow\infty$ is interesting, and we shall now discuss another situation when it holds. In the absence of Ehrhard's inequality, for no measure other than the Gaussian, can such a conclusion be readily drawn.

Recall that a measure $\mu$ with density $e^{-V}$ is called \emph{uniformly strictly log-concave} if $\nabla^2 V\geq k_1 \rm{Id}$, for some $k_1>0$. We shall show

\begin{theorem}\label{meas-simple}
Let $\mu$ be a uniformly strictly log-concave probability measure on $\R^n$ with an even density. Then $p_s(\mu,a)\rightarrow_{a\rightarrow 1}\infty$.
\end{theorem}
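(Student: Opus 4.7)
The plan is to use the infinitesimal (local) form of the $L^p$-Brunn-Minkowski inequality for symmetric bodies, developed in \cite{KolLiv, CLM}, together with the concentration properties of uniformly strictly log-concave measures. By the identity $p_s(\mu,a)=\inf_{K:\,\mu(K)=a} p_s(K,\mu)$ stated above, it suffices to show that $p_s(K,\mu)\to\infty$ uniformly over symmetric convex $C^2_+$ bodies $K$ with $\mu(K)\to 1$; general symmetric convex $K$ are then handled by approximation.

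The first ingredient is a concentration estimate. If $\mu$ has even density $e^{-V}$ with $\nabla^2 V\geq k_1 \mathrm{Id}$, then every one-dimensional marginal of $\mu$ has sub-Gaussian tails with a constant depending only on $k_1$. Consequently, if $K$ is a symmetric convex body with $\mu(K)\geq a$, then its in-radius $r_K$ satisfies $r_K\geq r(a)$ with $r(a)\to\infty$ as $a\to 1$: indeed, if $r_K$ were bounded by some $t$, there would exist a unit vector $\theta\in\sfe$ with $K\subset\{|\langle x,\theta\rangle|\leq t\}$, and $\mu$ of the right-hand side is bounded away from $1$ uniformly in $\theta$.

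With this in hand, I would revisit the local inequality characterizing $p_s(K,\mu)\geq p$: a symmetric trace-Poincar\'e-type inequality on $\partial K$ whose bilinear form involves the Hessian of $V$ restricted along $\partial K$, the second fundamental form of $\partial K$, and the support function $h_K$. The goal is to show that for $K$ with $r_K\to\infty$, this bilinear form has a spectral gap growing to infinity. The $\nabla^2 V\geq k_1\mathrm{Id}$ contribution produces a term of size at least $k_1 r_K$ after pairing against the support function along $\partial K$ (using $h_K\geq r_K$ and a Brascamp-Lieb-type inequality for even test functions on $\partial K$, which converts the $\nabla^2 V$-bilinear form into a Poincar\'e-type lower bound with the correct scaling), while the second-fundamental-form corrections remain manageable on $C^2_+$ bodies after suitable smoothing. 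Combining these gives $p_s(K,\mu)\gtrsim k_1 r_K\to\infty$.

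The hard part is controlling the sign and size of the second-fundamental-form corrections, which need not be sign-definite. The expected remedy is a careful use of the symmetric convex structure --- reducing, after a position-normalization of $K$, to situations where the curvature contributions are controlled by quantities of order $1/r_K$ that are negligible compared to the growing $k_1 r_K$ term --- followed by a standard approximation passing from $C^2_+$ symmetric convex bodies to the general class.
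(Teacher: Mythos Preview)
Your proposal is not a proof; it is a strategy with an explicitly acknowledged gap, and that gap is real. You intend to work with the boundary (second-variation) form of the local $L^p$-Brunn--Minkowski inequality and argue that the $\nabla^2 V\geq k_1\mathrm{Id}$ contribution yields a spectral gap of order $k_1 r_K$, while the curvature terms are ``of order $1/r_K$'' after a position normalization. But nothing links the second fundamental form of $\partial K$ to the in-radius: a symmetric convex body with huge in-radius can have arbitrarily small or arbitrarily large principal curvatures at various boundary points, and no linear position fixes this. Moreover, the claim that pairing $\nabla^2 V$ against the support function produces a term $\gtrsim k_1 r_K$ uniformly over even test functions is not justified; a test $f$ can concentrate on portions of $\partial K$ where $\langle\nabla V,n_x\rangle$ is small, and there is no ``Brascamp--Lieb on $\partial K$'' of the kind you invoke. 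So the heuristic $p_s(K,\mu)\gtrsim k_1 r_K$ is unsupported, and the piece you label ``the hard part'' is exactly the part that would have to be done.

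The paper's argument avoids the boundary and curvature entirely. It stays with the interior $L^2$ criterion (Proposition~\ref{key_prop}) and makes the substitution $u=v+tV$ (not $u=v+t|x|^2/2$). Applying the Brascamp--Lieb inequality to $\nabla v$ and optimizing in $t$ gives
\[
\int_K \mathrm{tr}\bigl(\nabla^2 u(\nabla^2 V)^{-1}\nabla^2 u\bigr)\,d\mu \;\geq\; \int_K |\nabla u|^2\,d\mu + \frac{\bigl(\int_K Lu\,d\mu\bigr)^2}{\int_K LV\,d\mu}.
\]
Using $\nabla^2 V\geq k_1\mathrm{Id}$ only to compare $\mathrm{tr}(\nabla^2 u(\nabla^2 V)^{-1}\nabla^2 u)$ with $\|\nabla^2 u\|^2/k_1$, one reads off
\[
p_s(K,\mu)\;\geq\;\frac{k_1}{\frac{1}{\mu(K)}\int_K LV\,d\mu}.
\]
The conclusion then follows in one line: since $\int_{\R^n} LV\,d\mu=0$ (integration by parts), dominated convergence gives $\frac{1}{\mu(K)}\int_K LV\,d\mu\to 0$ as $\mu(K)\to 1$. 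No in-radius, no second fundamental form, no smoothing of $K$ beyond the standard reduction to $C^2_+$ bodies in Proposition~\ref{key_prop}. The decisive idea you are missing is to perturb by $tV$ rather than by a geometric quantity attached to $K$, which replaces your uncontrolled curvature terms by the single scalar $\frac{1}{\mu(K)}\int_K LV\,d\mu$.
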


In Section 2 we discuss preliminaries. In Section 3 we show an upper bound on the Poincar\'e constant of a restriction of an isotropic log-concave probability measure to a symmetric convex subset. In Section 4 we discuss general log-concave probability measures and prove Theorems \ref{log-concave-general}, \ref{product} and \ref{rot-inv}. In Section 5 we prove Theorem \ref{meas-simple}.

\textbf{Acknowledgement.} The author is grateful to Benjamin Jaye for many fruitful conversations. The author is grateful to Alexander Kolesnikov for teaching her a lot of mathematics, and also for pointing out to her that Proposition 6.3 from \cite{gal2} could be extended to Proposition \ref{prop1}. The author is grateful to Alexandros Eskenazis for bringing to her attention Remark 33 from \cite{ENT}, which has led to the formulation of the Remark \ref{remark-refer}. The author is also extremely grateful to Pierre Bizeul (more details in Remark \ref{pierre}.)

The author is supported by the NSF CAREER DMS-1753260. The author worked on this project while being a Research Fellow at the program in Probability, Geometry, and Computation in High Dimensions at the Simons Institute for the Theory of Computing. The paper was completed while the author was in residence at the Hausdorff Institute of Mathematics at the program in The Interplay between High-Dimensional Geometry and Probability.

\section{Preliminaries.}

Recall that the Brascamp-Lieb inequality (see \cite{BrLi}, or (15) in \cite{CoKl} for the full generality) says that for any locally Lipschitz function $f\in L^2(\R^n)$ and any convex function $V:\R^n\rightarrow\R,$ we have
\begin{equation}\label{BrLi}
\int_{\R^n} f^2 d\mu-\left(\int_{\R^n} f d\mu\right)^2\leq\int_{\R^n} \langle (\nabla^2 V)^{-1}\nabla f,\nabla f\rangle d\mu,
\end{equation}
where $d\mu(x)=e^{-V(x)}dx$, and $\mu$ is a probability measure. Note that the integral on the right hand side makes sense in the almost everywhere sense. The function $e^{-V}$ is called log-concave when $V$ is convex. See Brascamp, Lieb \cite{BrLi}, or e.g. Bobkov, Ledoux \cite{BL1-BrLib}.

Recall that a set $K$ is called convex if together with every pair of points it contains the interval connecting them, and recall that the characteristic function of a convex set is log-concave. As a consequence of (\ref{BrLi}), for any convex body $K$, 
\begin{equation}\label{BrLi-convex}
\mu(K)\int_K f^2 d\mu-\left(\int_K f d\mu\right)^2\leq\mu(K)\int_K \langle (\nabla^2 V)^{-1}\nabla f,\nabla f\rangle d\mu.
\end{equation}
In the case of the standard Gaussian measure $\gamma$, this becomes, for any convex set $K,$
\begin{equation}\label{poinc-sect4}
\gamma(K)\int_K f^2 d\gamma-\left(\int_K f d\gamma\right)^2\leq\gamma(K)\int_K |\nabla f|^2 d\gamma.
\end{equation}
Furthermore, Cordero-Erasquin, Fradelizi and Maurey showed \cite{CFM} that for symmetric convex sets and even $f$,
\begin{equation}\label{poinc-sect4-sym}
\gamma(K)\int_K f^2 d\gamma-\left(\int_K f d\gamma\right)^2\leq\frac{1}{2}\gamma(K)\int_K |\nabla f|^2 d\gamma.
\end{equation}

Next, we state the following result which is well-known to experts; for the proof, see e.g. Lemma 2.14 from \cite{gal2}.

\begin{lemma}\label{moments}
Let $\mu$ be any rotation-invariant probability measure with an absolutely continuous density. Then 
\begin{itemize}
\item For any $q>0,$ and any convex body $K$ containing the origin,
$$\int_K |x|^q d\mu(x)\geq \int_{B(K)} |x|^q d\mu(x),$$
\item For any $q<0,$ and any convex body $K$ containing the origin,
$$\int_K |x|^q d\mu(x)\leq \int_{B(K)} |x|^q d\mu(x),$$
\end{itemize}
where $B(K)$ is the Euclidean ball centered at the origin such that $\mu(B(K))=\mu(K).$
\end{lemma}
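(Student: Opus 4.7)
The plan is a direct bathtub-principle (rearrangement) argument. Since $\mu$ is rotation invariant with absolutely continuous density, write $d\mu(x) = \rho(|x|)\,dx$ for some nonnegative $\rho$ on $[0,\infty)$, and let $R$ be the radius of the centered ball $B(K) = \{x : |x| \le R\}$, chosen so that $\mu(B(K)) = \mu(K)$.

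First I would split both sets around their common intersection. Writing $K = (K \cap B(K)) \sqcup (K \setminus B(K))$ and similarly for $B(K)$, the hypothesis $\mu(K) = \mu(B(K))$ forces $\mu(K \setminus B(K)) = \mu(B(K) \setminus K)$. Subtracting the integral of $|x|^q$ over the common piece gives
$$\int_K |x|^q\,d\mu - \int_{B(K)} |x|^q\,d\mu \;=\; \int_{K \setminus B(K)} |x|^q\,d\mu \;-\; \int_{B(K) \setminus K} |x|^q\,d\mu.$$

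Next I would exploit monotonicity of the radial weight: on $K \setminus B(K)$ one has $|x| \ge R$, while on $B(K) \setminus K$ one has $|x| \le R$. For $q > 0$, $r \mapsto r^q$ is increasing, so $|x|^q \ge R^q$ on the first piece and $|x|^q \le R^q$ on the second; combined with the equality of $\mu$-measures this yields
$$\int_{K \setminus B(K)} |x|^q\,d\mu \;\ge\; R^q\,\mu(K \setminus B(K)) \;=\; R^q\,\mu(B(K) \setminus K) \;\ge\; \int_{B(K) \setminus K} |x|^q\,d\mu,$$
proving the first inequality. For $q < 0$ the function $r \mapsto r^q$ is decreasing and the same chain, with both inequalities reversed, gives the second.

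There is really no serious obstacle here: the argument uses only the rotation invariance of $\mu$ (which ensures the sublevel sets of $|x|^q$ and the level sets of the density coincide, so the bathtub principle applies cleanly) together with the defining property of $B(K)$. Neither convexity of $K$ nor the containment of the origin is used in the argument; these assumptions are presumably included only because they are what is available in the applications, and in the $q<0$ case the origin-containment makes integrability of $|x|^q$ against $\mu$ over $B(K)$ unambiguous (when $\int_{B(K)}|x|^q d\mu = \infty$ the desired inequality is vacuous).
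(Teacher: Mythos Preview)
Your argument is correct; this is the standard bathtub (layer-cake/rearrangement) comparison, and it goes through exactly as you wrote it. The paper itself does not supply a proof of this lemma: it simply states that the result is well known to experts and cites Lemma~2.14 of \cite{gal2}. So there is nothing in the present paper to compare against, but the argument you give is precisely the elementary one that underlies such statements. Your observation that neither convexity nor origin-containment is actually needed for the inequality itself is accurate; the paper presumably imposes those hypotheses only because that is the setting in which the lemma is applied.
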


The next lemma follows from computations e.g. in Livshyts \cite{Liv1}. We outline the proof for the reader's convenience.
\begin{lemma}\label{maybe}
For $p,q>0,$ for any $R>0,$
$$\frac{1}{\mu(R B_2^n)}\int_{RB_2^n} |x|^q e^{-\frac{|x|^p}{p}} dx\leq c(p,q)n^{\frac{q}{p}}.$$
\end{lemma}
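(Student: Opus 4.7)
The plan is to reduce the bound to a one-dimensional ratio of moments, establish monotonicity in the truncation radius, and then evaluate the limiting ratio with the Gamma function.

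By rotation invariance of both $RB_2^n$ and the density $e^{-|x|^p/p}$, passing to polar coordinates rewrites the claim as
$$\frac{\int_0^R r^{q+n-1} e^{-r^p/p}\,dr}{\int_0^R r^{n-1} e^{-r^p/p}\,dr}\leq c(p,q)\,n^{q/p}.$$
Denote this ratio by $f(R)$. The first key step is to verify that $f$ is nondecreasing in $R$. This is a straightforward differentiation: after multiplying through by $\bigl(\int_0^R r^{n-1} e^{-r^p/p}\,dr\bigr)^2$, the sign of $f'(R)$ is controlled by
$$R^{n-1} e^{-R^p/p}\int_0^R (R^q - r^q)\,r^{n-1} e^{-r^p/p}\,dr\geq 0,$$
since $R\geq r$ on $[0,R]$.

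Monotonicity reduces the problem to the case $R=\infty$, i.e.\ to bounding the moment against the full measure on $\R^n$. The substitution $s=r^p/p$ gives
$$\int_0^\infty r^{s-1} e^{-r^p/p}\,dr=\tfrac{1}{p}\,p^{s/p}\,\Gamma(s/p),$$
so the limiting ratio becomes $p^{q/p}\cdot \Gamma(\tfrac{n+q}{p})/\Gamma(\tfrac{n}{p}).$

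The last step is a standard Stirling-type estimate. The classical bound $\Gamma(x+\alpha)/\Gamma(x)\leq C_\alpha\, x^\alpha$ (valid for $x\geq 1$ and $\alpha\geq 0$, with $C_\alpha$ depending only on $\alpha$) applied with $x=n/p$ and $\alpha=q/p$ yields
$$p^{q/p}\cdot\frac{\Gamma(\frac{n+q}{p})}{\Gamma(\frac{n}{p})}\leq c(p,q)\,n^{q/p},$$
possibly after enlarging $c(p,q)$ to absorb the bounded number of small dimensions where $n/p<1$. No step should present a real obstacle; the only mildly delicate point is the monotonicity in $R$, which ensures that truncation to a ball of arbitrary radius does not worsen the full-space moment bound, and this is handled cleanly by the differentiation above.
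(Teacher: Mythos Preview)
Your proof is correct and takes a genuinely different, more elementary route than the paper. The paper argues by a case split on the size of $R$: for $R\ge C(p,q)n^{1/p}$ it invokes Laplace-method asymptotics from \cite{Liv1} to compare both truncated integrals to their global maxima and deduce $J^p_{n+q-1}(R)/J^p_{n-1}(R)\le c'(p,q)n^{q/p}$, while for $R\le C(p,q)n^{1/p}$ it uses the trivial pointwise bound $t^{n+q-1}\le R^q t^{n-1}$ on $[0,R]$. Your monotonicity observation collapses this dichotomy: once $f(R)$ is shown to be nondecreasing, the estimate at $R=\infty$ suffices, and there the ratio is an explicit quotient of Gamma functions handled by Stirling. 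Your approach avoids any asymptotic analysis of the integrals and is self-contained; the paper's approach, on the other hand, yields along the way the concentration-type information (location of the maximum near $n^{1/p}$ and decay beyond it) that is reused elsewhere in the paper, so each method has its own merits.
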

\begin{proof} Let us denote
$$J^p_{k}(R)=\int_0^R t^k e^{-\frac{t^p}{p}} dt.$$
Integrating in polar coordinates, we note that
	$$\frac{1}{\mu(R B_2^n)}\int_{RB_2^n} |x|^q e^{-\frac{|x|^p}{p}} dx =\frac{J^p_{n+q-1}(R)}{J^p_{n-1}(R)}.$$
	Denote also
	$$g^p_k(t)=t^k e^{-\frac{t^p}{p}}.$$
	It was shown in \cite{Liv1} via the Laplace method (see e.g. De Brujn \cite{brujn}), that there exists a constant $C(p,q)>0$ such that for every $R\geq C(p,q)n^{\frac{1}{p}}$,
	$$\frac{J^p_{n+q-1}(R)}{g_{n+q}(n^{1/p})} \in [\frac{c_1}{\sqrt{n}},\frac{c_2}{\sqrt{n}}],$$
	for some $0<c_1<c_2,$ possibly depending on $p$ and $q.$ Thus for $R\geq C(p,q)n^{\frac{1}{p}}$,
$$\frac{J^p_{n+q-1}(R)}{J^p_{n-1}(R)}\leq c'(p,q)n^{\frac{q}{p}}.$$	
	Therefore, the conclusion of the Lemma is verified when $R\geq C(p,q)n^{1/p}.$
	
	In the complementary case when $R\leq C(p,q)n^{1/p},$ we estimate
	$$J^p_{n+q-1}(R)=\int_0^R t^{n+q-1} e^{-\frac{t^p}{p}} dt \leq R^q \int_0^R t^{n-1} e^{-\frac{t^p}{p}} dt=R^q J^p_{n-1}(R)\leq C'(p,q)n^{\frac{q}{p}} J^p_{n-1}(R),$$
	and the Lemma follows.
\end{proof}

\section{General bounds for Poincar\'e constants of restrictions.}

In this section we discuss bounds on Poincar\'e constants of restriction of isotropic log-concave probability measures to convex sets. The estimate relies on techniques from the theory of log-concave measures (see Klartag \cite{Kl}, \cite{Kl2}, \cite{Kl3}, V. D. Milman \cite{KM}, E. Milman \cite{LogLapl}, Barthe \cite{BK}). An interested reader may also check the recent significant progress on the KLS conjecture \cite{KLS} by Jambulapati, Lee, Vempala \cite{JLV}, Klartag, Lehec \cite{KlLe}, Chen \cite{ChenKLS}, which improved up on the past work of Lee, Vempala \cite{LV}, both of these works building up on Eldan's stochastic localization scheme \cite{Eldan}.  

Recall that the Poincar\'e constant of the restriction of a measure $\mu$ onto a set $K$ is the smallest number $C_{poin}(K,\mu)>0$ such that for any function $f\in W^{1,2}(K,\gamma)\cap Lip(K)$,

\begin{equation}\label{poinc-const}
	\mu(K)\int_K f^2 d\mu-\left(\int_K f d\mu\right)^2\leq C^2_{poin}(K,\mu)\mu(K)\int_K |\nabla f|^2 d\mu.
\end{equation}

Recall that a probability measure $\mu$ on $\R^n$ is called isotropic if $\int_{\R^n} x d\mu=0$ and $Cov(\mu)=(\int_{\R^n} x_i x_j d\mu)=Id.$ We show

\begin{theorem}\label{KLS} 
\begin{itemize}
\item Let $\mu$ be a log-concave even isotropic probability measure. Then for any symmetric convex set $K,$ $C_{poin}(\mu, K)\leq Cn,$ where $C>0$ is an absolute constant independent of the dimension. 
\item If, additionally, $\mu$ is rotation-invariant, then $C_{poin}(\mu, K)\leq Cn^{0.5}.$ 
\end{itemize}
\end{theorem}

\begin{remark}\label{pierre} In the earlier version of this paper, the bounds we got were $n^{1+o(1)}$ and $n^{0.5+o(1)}$, and relied on the recent progress on the KLS constant. We are very grateful to Pierre Bizeul for pointing out that our proof already gives the better bound, in view of the older result of \cite{KLS}, namely (\ref{kls-trace}).
\end{remark}

In order to prove the estimates, we start by verifying the following lemma, which is believed to be well known to experts.

\begin{lemma}\label{ballsbodies}
Let $\mu$ be an isotropic probability log-concave measure such that $d\mu(x)=e^{-V(x)}dx$. Then for any $R>0,$ and any $\theta\in\sfe,$ one has
$$\int_0^R t^{n+3} e^{-V(t \theta)} dt\leq Cn^4 \int_0^{R} t^{n-1} e^{-V(t \theta)} dt.$$
If, additionally, $\mu$ is rotation-invariant, then 
$$\int_0^R t^{n+3} e^{-V(t \theta)} dt\leq Cn^2 \int_0^{R} t^{n-1} e^{-V(t \theta)} dt.$$
Here $C$ stands for absolute constants, independent of the dimension, which may change line to line.
\end{lemma}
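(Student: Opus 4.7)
The plan is to reduce the lemma, via a monotonicity step and a reverse H\"older inequality of Borell, to controlling a second moment that can be read off from isotropy. Throughout, fix $\theta\in\sfe$, set $\rho_\theta(t):=t^{n-1}e^{-V(t\theta)}$, which is log-concave on $(0,\infty)$ since $(n-1)\log t$ is concave and $V(t\theta)$ is convex in $t$, and write $F_k(R,\theta):=\frac{\int_0^R t^k\rho_\theta(t)\,dt}{\int_0^R\rho_\theta(t)\,dt}$. The inequality to prove is $F_4(R,\theta)\leq Cn^4$ in the general case and $F_4(R,\theta)\leq Cn^2$ in the rotation-invariant case.

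The first step is a monotonicity reduction. Differentiating $F_k$ in $R$ yields $\partial_R F_k(R,\theta)=\rho_\theta(R)(R^k-F_k(R,\theta))\big/\int_0^R\rho_\theta(t)\,dt$, and $F_k(R,\theta)\leq R^k$ is immediate from $t^k\leq R^k$ on $[0,R]$. Hence $F_k$ is non-decreasing in $R$, and it suffices to bound $F_4(\infty,\theta)$.

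The second step is Borell's reverse H\"older inequality applied to the log-concave probability measure $\nu_\theta$ on $(0,\infty)$ with density proportional to $\rho_\theta$. This yields an absolute constant $c_1$ such that
$$\left(\int t^4\,d\nu_\theta\right)^{1/4}\leq c_1\left(\int t^2\,d\nu_\theta\right)^{1/2},$$
so $F_4(\infty,\theta)\leq c_1^4\,F_2(\infty,\theta)^2$. It therefore suffices to establish $F_2(\infty,\theta)\leq Cn$ in the rotation-invariant case and $F_2(\infty,\theta)\leq Cn^2$ in the general case.

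In the rotation-invariant case, $V(t\theta)=g(t)$ is independent of $\theta$, and polar integration of the identities $\int_{\R^n}|x|^2\,d\mu=n$ and $\int d\mu=1$ gives $F_2(\infty,\theta)=n$ outright, which together with Borell completes this case. The main obstacle is the general case, where isotropy only supplies the angular average $\int_{\sfe}F_2(\infty,\theta)\,B(\theta)\,d\sigma(\theta)=n$ with $B(\theta):=\int_0^\infty\rho_\theta(t)\,dt$, rather than a pointwise bound along each ray. The route I intend to take is to analyze the mode $t^*(\theta)$ of $\rho_\theta$, characterized by $\langle t^*(\theta)\theta,\nabla V(t^*(\theta)\theta)\rangle=n-1$. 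Since $-\partial_t^2\log\rho_\theta(t)\geq(n-1)/t^2$, the measure $\nu_\theta$ concentrates on the scale $t^*(\theta)/\sqrt{n-1}$ about its mode, yielding $\int t^2\,d\nu_\theta\lesssim t^*(\theta)^2$, so the problem reduces to the pointwise bound $t^*(\theta)\leq Cn$. I expect this last step to be the most delicate, and to require combining the integration-by-parts identity $\int\langle x,\nabla V(x)\rangle\,d\mu=n$ with the KLS-type concentration estimates of Chen \cite{ChenKLS} applied to the full measure $\mu$.
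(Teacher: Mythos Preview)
Your reductions are clean and in some places tighter than the paper's: the monotonicity of $R\mapsto F_k(R,\theta)$ replaces the paper's case split on $R\lessgtr 5t_0^\theta(n-1)$, and your rotation-invariant argument (polar coordinates plus isotropy give $F_2(\infty,\theta)=n$ exactly, then Borell's reverse H\"older) is more direct than the paper's, which goes through the mode estimate $t_0^\theta(n-1)=(1+o(1))\sqrt{n}$.

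The genuine gap is in the general case. You correctly isolate the target $t^*(\theta)\le Cn$ --- this is exactly the paper's bound $t_0^\theta(n-1)\le Cn$ --- but your proposed route via $\int\langle x,\nabla V\rangle\,d\mu=n$ combined with Chen's spectral gap does not reach it. Both of those inputs are global: they control averages over $\mu$ and concentration of Lipschitz functionals under $\mu$, whereas $t^*(\theta)$ is a pointwise quantity attached to the one-dimensional density along a single ray. Chen's estimate yields $\mu(|x|>C\sqrt{n}+t)\le Ce^{-ct/n^{o(1)}}$, but a direction $\theta$ along which $t^*(\theta)$ is large can carry negligible $\mu$-mass, so such concentration says nothing about it. No amount of integration by parts on $\R^n$ converts this into a bound valid for every $\theta$.

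What the paper uses instead is Ball's construction: the function $\theta\mapsto J_{n+1}(\theta)^{-1/(n+2)}$ is (after scaling) the Minkowski gauge of a convex body, and when $\mu$ is isotropic that body is isotropic as well. Then the \emph{elementary} diameter bound of Kannan--Lov\'asz--Simonovits (isotropic convex body $\subset Cn\,B_2^n$, not the spectral-gap conjecture) gives $J_{n+1}(\theta)\le (Cn)^{n+2}$ for every $\theta$. From this, standard one-dimensional log-concave moment ratios yield $t^*(\theta)\le Cn$ and $J_{n+1}(\theta)/J_{n-1}(\theta)\le Cn^2$, which is precisely your $F_2(\infty,\theta)\le Cn^2$. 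So the missing idea is the passage through Ball's body, which is what turns the global isotropy condition into a pointwise-in-$\theta$ bound; no appeal to Chen is needed (or helpful) at this step.
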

\begin{proof} Let $t^{\theta}_0(k)\in\R$ be such a number that the function $g^{\theta}_{k}(t)=t^{k} e^{-V(t\theta)}$ is maximized at $t^{\theta}_0(k)$. That is, letting $V_{\theta}(t)=V(\theta t),$ and taking the derivative in $t$ (as in (6) in \cite{Liv}), we see that
$$\frac{d}{dt}V_{\theta}(t^{\theta}_0(k))\cdot t^{\theta}_0(k)=k.$$
Let 
$$J_{k}(\theta):=\int_0^{\infty} t^{k} e^{-V_{\theta}(t)}dt.$$
Then, as it was shown by Klartag and Milman \cite{KM}, (see also Lemma 2 in Livshyts \cite{Liv}),
\begin{equation}\label{J}
\frac{J_{k}(\theta)}{t^{\theta}_0(k) g^{\theta}_{k}(t^{\theta}_0(k))}\in [\frac{1}{k+1},\frac{C}{\sqrt{k}}].
\end{equation}
Ball (Theorem 5 in \cite{Ball-bod}) showed that
$$|x|\left(J_{n+1}\left(\frac{x}{|x|}\right)\right)^{-\frac{1}{n+2}}$$
defines a norm on $\R^n.$ A straightforward computation shows that the unit ball of this norm is an isotropic convex body, provided that $\mu$ is isotropic. Kannan, Lovasz and Simonovits \cite{KLS} showed that any isotropic convex body is contained in a ball of radius at most $Cn.$ Thus for any $\theta\in\sfe,$ $\left(J_{n+1}\left(\theta\right)\right)^{-\frac{1}{n+2}}\geq \frac{1}{Cn},$ or in other words,
\begin{equation}\label{J<n}
J_{n+1}(\theta)\leq (Cn)^{n+2}.
\end{equation}
In addition, one may show, for $c_1,c_2>0,$ that
\begin{equation}\label{Jrat}
\frac{J_{n+c_1}(\theta)}{J_{n-c_2}(\theta)}\leq C'n^{c_1+c_2};
\end{equation}
this follows from the results of Klartag, Milman \cite{KM} or Livshyts \cite{Liv}; alternatively, one may get it from the combination of (\ref{J<n}) with the one-dimensional case of Theorem 3.5.11 from Artstein-Avidan, Giannopolous, Milman \cite{AGM}, applied with $d\mu=e^{-V_{\theta}(t)}1_{\{t>0\}}$, $f=t$, $q=n+c_1$ and $p=n-c_2.$

Combining (\ref{J<n}), (\ref{Jrat}), and the lower bound of (\ref{J}), we get
\begin{equation}\label{refforref}
t^{\theta}_0(n-1) g^{\theta}_{n+1}(t^{\theta}_0(n-1))\leq (Cn)^{n+2}.
\end{equation}
Since $\frac{d}{dt}V_{\theta}(t^{\theta}_0(n-1))\cdot t^{\theta}_0(n-1)=n-1,$ and as the derivative of $V(\theta t)$ in $t$ is non-decreasing, we have 
$$
V(t^{\theta}_0(n-1))\leq V(0)+ t^{\theta}_0(n-1) V'(t^{\theta}_0(n-1))=V(0)+n-1,
$$
and thus 
\begin{equation}\label{g}
	g_{n+1}(t^{\theta}_0(n-1))\leq t^{\theta}_0(n-1)^{n+1} e^{-V(0)-n+1}
\end{equation}
Combining (\ref{refforref}) raised to the power $\frac{1}{n+2},$ and (\ref{g}) with the fact that, by isotropicity and log-concavity, $V(0)\geq 0$ (see e.g. Lemma 5.5 in \cite{LoVe}), we see that
\begin{equation}\label{conseq}
t_0^{\theta}(n-1) \leq C'n.
\end{equation}

Using (\ref{conseq}), we see that if $R<5 t^{\theta}_0(n-1),$ we get
$$\int_0^R t^{n+3} e^{-V(t \theta)} dt\leq CR^4 \int_0^{R} t^{n-1} e^{-V(t \theta)} dt\leq Cn^4 \int_0^{R} t^{n-1} e^{-V(t \theta)} dt.$$

Next, it was shown e.g. by Klartag and Milman \cite{KM}, (also the equation (19) in Livshyts \cite{Liv} is a stronger version of the fact below):
\begin{equation}\label{conc}
\int_{5t_0^{\theta}(k)}^{\infty} t^{k} e^{-V_{\theta}(t)} dt\leq e^{-Ck} J_{k}(\theta).
\end{equation}

Therefore, if $R\geq 5 t^{\theta}_0(n-1),$ using (\ref{Jrat}) with $c_1=3, c_2=1,$ and then using (\ref{conc}), we get
$$\int_0^R t^{n+3} e^{-V(t \theta)} dt\leq \int_0^{\infty} t^{n+3} e^{-V(t \theta)} dt\leq $$$$Cn^4 \int_0^{\infty} t^{n-1} e^{-V(t \theta)} dt\leq C'n^4 \int_0^{R} t^{n-1} e^{-V(t \theta)} dt.$$

In summary, both when $R<5 t^{\theta}_0(n-1)$ and $R\geq 5 t^{\theta}_0(n-1),$ we get the desired conclusion of the first part of the Lemma. 

In the case when $\mu$ is rotation-invariant, its Ball's body is the isotropic ball, and therefore $t^{\theta}_0(n-1)=(1+o(1))\sqrt{n}$ for all $\theta\in\sfe.$ Applying this bound throughout in place of (\ref{J}), we get the second assertion.
\end{proof}

\textbf{Proof of the Theorem \ref{KLS}.} By the result from \cite{KLS} (see also Theorem 2 in Lee and Vempala \cite{LV}), 
\begin{equation}\label{kls-trace}
C_{poin}(\mu,K)\leq  C'\sqrt{Tr\,Cov(\mu,K)},
\end{equation}
where $Cov(\mu,K)$ is the covariance matrix of the restriction of $\mu$ on $K.$ In the case when $\mu$ is even and $K$ is symmetric, one has
$$Cov(\mu,K)_{ij}=\frac{1}{\mu(K)}\int_{K} x_i x_j d\mu(x),$$
and thus
$$Tr\, Cov(\mu,K) =\frac{1}{\mu(K)}\int_{K} |x|^2 d\mu.$$
We write, using polar coordinates:
$$\int_K |x|^4 d\mu(x)=\int_{\sfe} \int_0^{\|\theta\|^{-1}_K} t^{n+3} e^{-V(t\theta)} dtd\theta\leq$$$$ Cn^4 \int_{\sfe} \int_0^{\|\theta\|^{-1}_K} t^{n-1} e^{-V(t\theta)} dtd\theta=Cn^4\mu(K),$$
where the estimate comes from Lemma \ref{ballsbodies}. 

Therefore, 
$$C_{poin}(\mu,K)\leq \sqrt{\frac{1}{\mu(K)}\int_{K} |x|^2 d\mu}\leq \left(\frac{1}{\mu(K)}\int_{K} |x|^4 d\mu\right)^{\frac{1}{4}}\leq(Cn^4)^{\frac{1}{4}}=Cn.$$ 
In the case of rotation-invariant measures, we apply the second assertion of Lemma \ref{ballsbodies}, to get the bound $Cn^{0.5}. \square$

\medskip

\begin{remark} Note that Theorem \ref{KLS} is sharp up to an absolute constant. Indeed, one may find an isotropic convex body $L$ of diameter $Cn,$ and the restriction of the Lebesgue measure on $L$ onto the ``thin'' convex body $K$ approximating its diameter has the Poincar\'e constant of order $n.$ Furthermore, in the case of rotation-invariant measures, the restriction of the Lebesgue measure on the isotropic ball onto its diameter has Poincar\'e constant of order $\sqrt{n}.$

\end{remark}

We note that Theorem \ref{KLS} implies the following fact, which might be known to experts:

\begin{cor}\label{poin-verygen}
	Let $K\subset \R^n$ be a symmetric convex set which is not the whole space. Let $\mu$ be any even log-concave probability measure with $C^2$ density supported on the whole space. Then $C_{poin}(K,\mu)<\infty$. Moreover, $C_{poin}(K,\mu)$ is bounded from above by a constant which only depends on $\mu$ and $n,$ but not on $K.$
\end{cor}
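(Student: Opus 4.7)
The plan is to reduce to the isotropic setting handled by Theorem \ref{KLS} via an affine normalization. Since $\mu$ is log-concave with full support and $C^2$ density, all its moments are finite, so the covariance matrix $\Sigma=\int_{\R^n} x x^{T}\,d\mu(x)$ is a well-defined positive definite symmetric matrix. Set $A=\Sigma^{1/2}$ and let $\tilde\mu$ denote the pushforward of $\mu$ under the map $x\mapsto A^{-1}x$; equivalently, $\tilde\mu$ has density $|\det A|\,e^{-V(Ay)}$. Because $A$ is symmetric and invertible, $\tilde\mu$ inherits from $\mu$ evenness, $C^{2}$-regularity, log-concavity, and full support, and by the choice of $A$ it is isotropic. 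Set $\tilde K=A^{-1}K$, which is again symmetric and convex. Theorem \ref{KLS} then yields $C_{poin}(\tilde\mu,\tilde K)\leq n^{1+o(1)}$, a bound depending only on $n$.

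To transfer this estimate back to $(\mu,K)$, fix any $f\in W^{1,2}(K,\mu)\cap\mathrm{Lip}(K)$ and define $\tilde f(y)=f(Ay)$ on $\tilde K$. The substitution $x=Ay$ gives $\tilde\mu(\tilde K)=\mu(K)$ and $\int_{\tilde K}\tilde f^{\,k}\,d\tilde\mu=\int_K f^{k}\,d\mu$ for $k=1,2$; since $\nabla\tilde f(y)=A^{T}(\nabla f)(Ay)$, one also has
\[
\int_{\tilde K}|\nabla\tilde f|^{2}\,d\tilde\mu=\int_K\langle\Sigma\nabla f,\nabla f\rangle\,d\mu\leq \|\Sigma\|_{op}\int_K|\nabla f|^{2}\,d\mu.
\]
Substituting these identities and this inequality into the defining relation (\ref{poinc-const}) for $C_{poin}(\tilde\mu,\tilde K)$ applied to the test function $\tilde f$ gives
\[
C^{2}_{poin}(K,\mu)\leq \|\Sigma\|_{op}\,C^{2}_{poin}(\tilde\mu,\tilde K)\leq \|\Sigma\|_{op}\,n^{2+o(1)},
\]
and the right-hand side is finite and depends only on $\mu$ (through $\|\Sigma\|_{op}$) and on $n$, but not on $K$.

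Once the affine normalization is fixed, no real obstacle arises: the argument amounts to bookkeeping to verify that the hypotheses of Theorem \ref{KLS} (evenness, isotropy, log-concavity, and $C^{2}$ density with full support) persist under the linear change of variables, which they do since $A=\Sigma^{1/2}$ is symmetric and positive definite. The substantive content of the corollary is thus entirely packed into the uniform-in-$K$ estimate of Theorem \ref{KLS}; the role of the present argument is simply to reduce the general even log-concave case to the isotropic one at the cost of a fixed multiplicative factor $\|\Sigma\|_{op}^{1/2}$ depending only on $\mu$.
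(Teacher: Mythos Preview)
Your proof is correct and follows essentially the same route as the paper: push $\mu$ forward by a linear map to its isotropic position, apply Theorem~\ref{KLS} to the pushed-forward pair, and transfer the bound back via a change of variables, picking up a factor of $\|\Sigma\|_{op}^{1/2}$ that depends only on $\mu$. You supply more detail than the paper (explicitly identifying the normalizing map as $\Sigma^{-1/2}$ and carrying out the gradient computation), but the argument is the same.
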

\begin{proof} Indeed, let $T$ be the linear operator which pushes $\mu$ forward to its isotropic position $\tilde{\mu}$ (which exists by the assumptions). Then 
$$C_{poin}(K,\mu)\leq\|T^{-1}\|_{op}C_{poin}(TK, \tilde{\mu}),$$
as can be seen from the definition of the Poincar\'e constant together with the change of variables. By our assumptions, $\|T\|_{op}<\infty$, and by Theorem \ref{KLS}, $C_{poin}(TK, \tilde{\mu})\leq Cn,$ thus the Corollary follows.
\end{proof}

\begin{remark} In the derivation of the Corollary above, it is important that the transformation $T$ depends on $\mu$ but not $K:$ indeed, unless $K$ is bounded, there is no guarantee that one can bring the restriction of $\mu$ onto $K$ into an isotropic position. For example, if $K$ is a half-space and $\mu$ is Gaussian, no linear operator can make the restriction of $\mu$ onto $K$ isotropic.
\end{remark}

\begin{remark}\label{poin-verygen1}
	In fact, in the case when $\mu$ is not even, and $K$ is not symmetric, the assertion of Corollary \ref{poin-verygen} still holds: $C_{poin}(K,\mu)<\infty$. Moreover, $C_{poin}(K,\mu)$ is bounded from above by a constant which only depends on $\mu$ and $n,$ and the relative barycenter of $K$ with respect to $\mu.$ Indeed, the key place where we used symmetry is 
	$$\|Cov(\mu, K)\|_{op}\leq \sqrt{\frac{1}{\mu(K)}\int_K |x|^4 d\mu(x)},$$
	and in the non-symmetric case, this would be replaced with 
	$$\|Cov(\mu, K)\|_{op}=\sup_{\theta\in\sfe} \sqrt{\frac{1}{\mu(K)}\int_K |x-b|^2\langle x-b,\theta\rangle^2 d\mu(x)}\leq $$$$C(b)\sqrt{\frac{1}{\mu(K)}\int_K |x|^4 d\mu(x)}+C_1(b),$$
	for some constants $C(b), C_1(b)\geq 0$ which only depend on $b=\frac{1}{\mu(K)}\int_K x d\mu(x)$, which, in turn, is a finite number. 
	
We remark also that Lemma \ref{ballsbodies}, which was formally obtained under the assumption of symmetry, also holds with the assumption of the origin selected as the barycenter of $K$ with respect to $\mu$. We leave the details to the interested reader. 
\end{remark}

\section{Proof of Theorems \ref{log-concave-general}, \ref{product} and \ref{rot-inv}.}

The proof relies on the ``$L_2$ method'' of obtaining convexity inequalities, previously studied by Kolesnikov and Milman \cite{KM1}, \cite{KM2}, \cite{KM}, \cite{KolMil-1}, \cite{KolMilsupernew}, as well as Livshyts \cite{KolLiv}, Hosle \cite{HKL}, and others. 

We consider a log-concave probability measure $\mu$ on $\R^n$ with an even twice-differentiable density $e^{-V}.$ Consider also the associated operator
$$Lu=\Delta u-\langle \nabla u,\nabla V\rangle.$$
Recall our notation $n_x$ for the normal vector at the point $x\in\partial K$. Recall the following result from \cite{KolLiv}:

\begin{proposition}[KL \cite{KolLiv}]\label{key_prop}
Let $\mathcal{F}$ be a class of convex sets closed under Minkowski interpolation. Suppose for every $C^2$-smooth $K\in\mathcal{F}$, and any $f\in C^1(\partial K)$ there exists a $u\in C^2(K)\cap W^{1,2}(K)$ with $\langle \nabla u,n_x\rangle=f(x)$ on $x\in\partial K$, and such that
$$\frac{1}{\mu(K)}\int_K (\|\nabla^2 u\|^2+\langle \nabla^2 V\nabla u,\nabla u\rangle )d\mu\geq p\left(\E Lu\right)^2+Var(Lu),$$
where the expectation and the variance are with respect to the restriction of $\mu$ onto $K,$ and $\|\nabla^2 u\|$ is the Hilbert-Schmidt (Frobenius) norm of the Hessian matrix of $u$. Then for every pair of $K, L\in\mathcal{F}$ and any $\lambda\in [0,1],$ one has
$$\mu(\lambda K+(1-\lambda)L)^{p}\geq \lambda \mu(K)^{p}+(1-\lambda)\mu(L)^{p}.$$
\end{proposition}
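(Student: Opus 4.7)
The plan is to prove concavity of the function $t \mapsto v(t)^p$ on $[0,1]$, where $v(t) := \mu(K_t)$ and $K_t := (1-t)K+tL$; the claimed inequality then follows by plugging $t = \lambda$ into the definition of concavity. Equivalently, I must show $v(t)v''(t) + (p-1)v'(t)^2 \leq 0$ on $(0,1)$. Since $\mathcal{F}$ is closed under Minkowski interpolation, a standard smoothing lets me assume $K, L \in \mathcal{F}$ are $C^2$-smooth and strictly convex, so each $K_t$ also has these properties and lies in $\mathcal{F}$.

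First I would carry out the first-variation step. Writing $h_{K_t} = (1-t)h_K + t h_L$ for support functions, standard shape differentiation gives
$$v'(t) = \int_{\partial K_t} \phi \, d\mu_\partial, \qquad \phi(x) := h_L(n_x) - h_K(n_x),$$
where $n_x$ is the outer unit normal and $d\mu_\partial = e^{-V}\,d\mathcal{H}^{n-1}$. The hypothesis of the proposition, applied to $(K_t, \phi)$, produces $u \in C^2(K_t) \cap W^{1,2}(K_t)$ with $\langle \nabla u, n_x\rangle = \phi$ on $\partial K_t$, satisfying the stated $L^2$-inequality. The divergence identity $\int_{K_t} Lu\, d\mu = \int_{\partial K_t} \langle \nabla u, n_x\rangle\, d\mu_\partial = \int_{\partial K_t}\phi\, d\mu_\partial$ then gives $\int_{K_t} Lu\, d\mu = v'(t)$, so in the averaged notation of the hypothesis, $\int Lu = v'(t)/v(t)$.

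For the second-variation step, I would apply the weighted Bochner identity
$$\tfrac12 L|\nabla u|^2 = \|\nabla^2 u\|^2 + \langle \nabla u, \nabla Lu\rangle + \langle \nabla^2 V \nabla u, \nabla u\rangle,$$
integrate against $d\mu$ over $K_t$, and integrate by parts twice using $d\mu$-symmetry of $L$ together with the Neumann data $\phi$, producing
$$\int_{K_t}(Lu)^2 d\mu - \int_{K_t}\bigl(\|\nabla^2 u\|^2 + \langle \nabla^2 V \nabla u, \nabla u\rangle\bigr) d\mu = \int_{\partial K_t} \bigl[Lu\cdot\phi - \langle \nabla^2 u\cdot\nabla u, n_x\rangle\bigr] d\mu_\partial.$$
An independent shape-calculus computation of $v''(t)$ identifies the right-hand boundary integral with $v''(t)$ plus the tangential term $\int_{\partial K_t}\mathrm{II}(\nabla_\top u, \nabla_\top u)\, d\mu_\partial$, where $\mathrm{II}$ is the second fundamental form of $\partial K_t$; since $K_t$ is convex, $\mathrm{II} \succeq 0$, so this correction is non-negative and
$$v''(t) \leq \int_{K_t}(Lu)^2\, d\mu - \int_{K_t}\bigl(\|\nabla^2 u\|^2 + \langle \nabla^2 V \nabla u, \nabla u\rangle\bigr)d\mu.$$

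To finish, I would divide by $v(t)$ and invoke the hypothesis. Using $\mathrm{Var}(Lu) = v(t)^{-1}\int_{K_t}(Lu)^2 d\mu - (\int Lu)^2$, the hypothesis rewrites as
$$v(t)^{-1}\int_{K_t}(Lu)^2 d\mu - v(t)^{-1}\int_{K_t}\bigl(\|\nabla^2 u\|^2 + \langle \nabla^2 V \nabla u, \nabla u\rangle\bigr)d\mu \leq (1-p)\Bigl(\tfrac{v'(t)}{v(t)}\Bigr)^{2}.$$
Combined with the previous upper bound on $v''(t)$, this yields $v''(t)/v(t) \leq (1-p)(v'(t)/v(t))^2$, i.e.\ $v v'' + (p-1)(v')^2 \leq 0$, as required. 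The main obstacle I anticipate is the Bochner--Reilly bookkeeping in the third paragraph: verifying the precise identification of the boundary integral with $v''(t)$ modulo the manifestly non-negative $\mathrm{II}$-term. A secondary issue is the smoothing step, which must preserve both membership in $\mathcal{F}$ and the admissibility of $\phi = h_L - h_K$ so that the Neumann problem remains solvable throughout the approximation.
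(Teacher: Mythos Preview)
The paper does not prove this proposition: it is quoted verbatim as a result from \cite{KolLiv} and used as a black box, so there is no ``paper's own proof'' to compare against. Your sketch is precisely the argument given in \cite{KolLiv} (which in turn elaborates the local-to-global scheme of Kolesnikov--Milman \cite{KolMil-1,KM2,KolMilsupernew} and Colesanti \cite{Col1}): reduce the $p$-concavity of $t\mapsto \mu(K_t)$ to the second-order inequality $v v''+(p-1)(v')^2\le 0$, compute $v'(t)=\int_{\partial K_t}\phi\,d\mu_\partial$ with $\phi=h_L-h_K$, represent $\phi$ as the Neumann datum of some $u$, and then combine the generalized Reilly/Bochner identity with Colesanti's second-variation formula $v''(t)=\int_{\partial K_t}\bigl(H_\mu\phi^2+\langle\mathrm{II}^{-1}\nabla_\partial\phi,\nabla_\partial\phi\rangle\bigr)d\mu_\partial$ to bound $v''(t)$ by the bulk expression $\int_{K_t}(Lu)^2-\|\nabla^2u\|^2-\langle\nabla^2V\nabla u,\nabla u\rangle\,d\mu$. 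The two concerns you flag are exactly the places where care is needed: the boundary bookkeeping is handled in \cite{KM2,KolMil-1} (the $\mathrm{II}$-term arises from Cauchy--Schwarz $2\langle\nabla_\partial\phi,\nabla_\partial u\rangle\le\langle\mathrm{II}^{-1}\nabla_\partial\phi,\nabla_\partial\phi\rangle+\langle\mathrm{II}\,\nabla_\partial u,\nabla_\partial u\rangle$ inside the Reilly formula, giving the inequality in the right direction), and the smoothing/approximation step preserving $\mathcal{F}$ is treated in \cite{KolLiv}.
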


Next, the following Proposition will be the key ingredient for all three theorems \ref{log-concave-general}, \ref{product} and \ref{rot-inv}.

\begin{proposition}\label{prop2}
Suppose $\mu$ is an even log-concave probability measure. Let $K$ be a symmetric convex set in $\R^n$ and let $u:K\rightarrow\R$ be an even function in $W^{2,2}(K)\cap C^2(K)$. Then for any convex symmetric $A\subset K,$ one has
$$\int_K \|\nabla^2 u\|^2 d\mu\geq \frac{\mu(A)}{\mu(K)}\cdot\frac{(\frac{1}{\mu(A)}\int_A Lu \,d\mu)^2}{n+\frac{1}{\mu(A)}\int_A (C^2_{poin}(K,\mu)|\nabla V|^2-2\langle \nabla V,x\rangle) d\mu}.$$
\end{proposition}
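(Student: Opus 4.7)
The plan is to combine three ingredients: the trace Cauchy--Schwarz $(\Delta u)^2\leq n\|\nabla^2 u\|^2$, the Poincar\'e inequality on $K$ from Theorem \ref{KLS} applied componentwise to $\nabla u$, and the divergence identity $\mathrm{div}(xe^{-V})=(n-\langle x,\nabla V\rangle)e^{-V}$ integrated over $A$.

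First I would observe that, by evenness of $u$ combined with the symmetry of $K$ and $\mu$, each partial $\partial_i u$ is odd and therefore has vanishing $\mu$-mean on $K$. Applying the defining Poincar\'e relation (\ref{poinc-const}) to each $\partial_i u$ and summing over $i$ would give the auxiliary gradient bound
$$
\int_K|\nabla u|^2\,d\mu\;\leq\;C_{poin}^2(K,\mu)\int_K\|\nabla^2 u\|^2\,d\mu,
$$
which tames the gradient by the Hessian. Separately, the divergence identity applied over $A$ yields $\int_A(n-\langle x,\nabla V\rangle)\,d\mu=\int_{\partial A}\langle x,n\rangle\,d\mu_{\partial A}$, which will later be used to re-express $\int_A\langle\nabla V,x\rangle\,d\mu$ as $n\mu(A)$ minus a boundary term.

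Next I would view
$$
\int_A Lu\,d\mu\;=\;\int_K\bigl\langle(\nabla^2 u,\nabla u),\,(1_A I,\,-1_A\nabla V)\bigr\rangle\,d\mu
$$
as an inner product in the Hilbert space $L^2(K,d\mu;\,\mathrm{Sym}^2(\R^n)\oplus\R^n)$ and apply Cauchy--Schwarz with a free weighting parameter $t>0$ between the matrix and vector slots. Inserting the gradient Poincar\'e bound into the first factor produces
$$
\Bigl(\int_A Lu\,d\mu\Bigr)^2\leq(1+tC_{poin}^2)\Bigl(\int_K\|\nabla^2 u\|^2\,d\mu\Bigr)\Bigl(n\mu(A)+\tfrac1t\int_A|\nabla V|^2\,d\mu\Bigr),
$$
which already has the qualitatively correct denominator $n\mu(A)+C^2_{poin}\int_A|\nabla V|^2$, but with the wrong cross term: optimization in $t$ delivers $(\sqrt{n\mu(A)}+\sqrt{C^2_{poin}\int_A|\nabla V|^2})^2$ rather than the $-2\int_A\langle\nabla V,x\rangle\,d\mu$ appearing in the statement.

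To sharpen the estimate to the exact form, the plan is to run the Cauchy--Schwarz not with the pair $(1_A I,\,-1_A\nabla V)$ but with the \emph{shifted} vector test field $-1_A(\nabla V-x)$ and a correspondingly adjusted matrix test field, reintroducing the induced $\int_A\langle\nabla u,x\rangle\,d\mu$ through the divergence identity above. The identity $|\nabla V-x|^2=|\nabla V|^2-2\langle\nabla V,x\rangle+|x|^2$ on the Cauchy--Schwarz side produces exactly the $-2\langle\nabla V,x\rangle$ term, while the extra $|x|^2$ contribution and the $\int_A\langle\nabla u,x\rangle\,d\mu$ term are designed to telescope through the divergence identity into the $n\mu(A)$ part of the denominator. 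The main obstacle I anticipate is precisely this bookkeeping: orchestrating the shifted test field so that the two ``leftover'' contributions cancel exactly, recovering the precise denominator $n+\tfrac{1}{\mu(A)}\int_A(C^2_{poin}|\nabla V|^2-2\langle\nabla V,x\rangle)\,d\mu$. It is this cancellation, rather than any of the individual inequalities, that distinguishes the proposition from the weaker form a naive Cauchy--Schwarz would deliver.
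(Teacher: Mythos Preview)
Your naive Cauchy--Schwarz step is fine and, as you note, already gives the qualitatively correct bound with the non-sharp denominator $\bigl(\sqrt{n\mu(A)}+C_{poin}\sqrt{\int_A|\nabla V|^2}\bigr)^2$. The gap is in the sharpening step. Replacing the vector test $-1_A\nabla V$ by $-1_A(\nabla V-x)$ changes the pairing from $\int_A\langle\nabla u,\nabla V\rangle\,d\mu$ to $\int_A\langle\nabla u,\nabla V\rangle\,d\mu-\int_A\langle\nabla u,x\rangle\,d\mu$, so you are no longer computing $\int_A Lu$ but rather $\int_A Lu+\int_A\langle\nabla u,x\rangle\,d\mu$. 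The extra term $\int_A\langle\nabla u,x\rangle\,d\mu$ genuinely depends on $u$, and the divergence identity you wrote down, $\int_A(n-\langle x,\nabla V\rangle)\,d\mu=\int_{\partial A}\langle x,n_x\rangle\,d\mu_{\partial A}$, contains no information about $u$ whatsoever, so it cannot absorb this term. Your suggestion to compensate by ``adjusting the matrix test field'' cannot work either: a pairing $\int_A\langle\nabla^2 u,M\rangle\,d\mu$ is purely second-order in $u$ and cannot cancel a first-order quantity like $\int_A\langle\nabla u,x\rangle\,d\mu$ for arbitrary $u$. So the cancellation you are hoping for does not materialize, and the argument as outlined stalls at exactly the place you flagged as the ``main obstacle''.

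The paper's argument avoids this by shifting the \emph{function} rather than the test field: write $u=v+t\,\tfrac{|x|^2}{2}$ for a free parameter $t$. Then $\nabla v=\nabla u-tx$ is still odd, so the Poincar\'e inequality on $K$ applies to $\nabla v$ (this is where your Poincar\'e step should be used, on $\nabla v$ rather than on $\nabla u$). One has the pointwise identities $\|\nabla^2 u\|^2=\|\nabla^2 v\|^2+2t\Delta v+t^2 n$ and $\Delta v=\langle\nabla V,\nabla v\rangle+Lu-tn+t\langle x,\nabla V\rangle$; after applying Poincar\'e to $\int_K\|\nabla^2 v\|^2$ and completing the square $C_{poin}^{-2}|\nabla v|^2+2t\langle\nabla V,\nabla v\rangle\geq -t^2 C_{poin}^2|\nabla V|^2$, all dependence on $v$ disappears and one is left with $\int_K\|\nabla^2 u\|^2\,d\mu\geq\int\bigl(2t\,Lu-t^2(n+C_{poin}^2|\nabla V|^2-2\langle x,\nabla V\rangle)\bigr)\,d\mu$. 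The term $-2\langle x,\nabla V\rangle$ now arises for free from $L(\tfrac{|x|^2}{2})=n-\langle x,\nabla V\rangle$, with no leftover $u$-dependent pieces. Optimizing the resulting quadratic in $t$ gives the stated inequality. The missing idea in your plan is precisely this substitution: apply Poincar\'e to $\nabla u-tx$ rather than to $\nabla u$, which is what turns the cross term from $+2C_{poin}\sqrt{n\mu(A)\int_A|\nabla V|^2}$ into $-2\int_A\langle\nabla V,x\rangle\,d\mu$.
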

\begin{proof}
We write $u=v+t\frac{x^2}{2}$, for some $t\in\R,$ and note that
\begin{equation}\label{Hess-change1}
\|\nabla^2 u\|^2=\|\nabla^2 v\|^2+2t\Delta v+t^2n,
\end{equation}
and
\begin{equation}\label{Lu-change1}
Lu=Lv+tL\frac{x^2}{2}=Lv+tn-t\langle x,\nabla V\rangle.
\end{equation}
Consequently,
\begin{equation}\label{lapl-v}
\Delta v=\langle \nabla V,\nabla v\rangle+Lu-tn+t\langle x,\nabla V\rangle.
\end{equation}
Since $u$ is even, we have that $v$ is also even, and thus, by the symmetry of $K$ and the evenness of $\mu$, we have $\int \nabla v=0$. Therefore, using (\ref{Hess-change1}), and applying the Poincar\'e inequality (\ref{poinc-sect4}) to $\nabla v$, we get
\begin{equation}\label{step1}
\int_K\|\nabla^2 u\|^2 d\mu=\int_K \left(\|\nabla^2 v\|^2+2t\Delta v+t^2n\, \right) d\mu\geq \int_K \left(C^{-2}_{poin}(K,\mu) |\nabla v |^2+2t\Delta v+t^2n \right)\,d\mu.
\end{equation}
Plugging in (\ref{lapl-v}) into (\ref{step1}), and completing the square, we get
\begin{equation}\label{step2}
\int_K\|\nabla^2 u\|^2 d\mu\geq \int_K \left(-t^2C^2_{poin}(K,\mu) |\nabla V|^2+2t(Lu-tn+t\langle x,\nabla V\rangle)+t^2n \right)\,d\mu.
\end{equation}
Since $A\subset K,$ and writing $\int=\frac{1}{\mu(A)}\int_A d\mu,$ we have
\begin{equation}\label{step2-neq}
\frac{1}{\mu(K)}\int_K\|\nabla^2 u\|^2 d\mu\geq \frac{\mu(A)}{\mu(K)}\int_K \left( -t^2C^2_{poin}(K,\mu) |\nabla V|^2+2t(Lu-tn+t\langle x,\nabla V\rangle)+t^2n\right)\,d\mu.
\end{equation}

Plugging the optimal
$$t=\frac{-\int Lu}{n+\int C^2_{poin}|\nabla V|^2-2\langle \nabla V,x\rangle},$$
and simplifying the expression, we conclude the proof.
\end{proof}

\begin{remark} Note that Proposition \ref{prop2}, applied with $K=A$ and $V=0,$ becomes 
\begin{equation}\label{leb-rem}
\int_K \|\nabla^2 u\|^2 dx\geq \frac{1}{n|K|} \left(\int_K \Delta u\right)^2.
\end{equation}
This estimate does not require symmetry or convexity of $K,$ and simply follows point-wise $\|\nabla^2 u\|^2\geq \frac{1}{n} (\Delta u)^2,$ just because for any positive-definite matrix $A$ one has $\|A\|^2_{HS}\geq \frac{1}{n}tr(A)^2.$ Kolesnikov and Milman \cite{KolMil-1} used this estimate to deduce the (usual) Brunn-Minkowski inequality for convex sets, by combining (\ref{leb-rem}) with Proposition \ref{key_prop}, and solving the equation $\Delta u=1$ with an arbitrary Neumann boundary condition. 

In summary, Proposition \ref{prop2} gives the optimal bound in the case of Lebesgue measure. It also boils down to the tight bound of the Proposition 6.3 from \cite{gal2} in the case of the standard Gaussian measure.
\end{remark}

\medskip

\subsection{Proof of Theorem \ref{log-concave-general}} We shall need a couple of facts about isotropic log-concave probability measures. Firstly, combining Lemma 5.4 and Corollary 5.3 from Klartag \cite{Kl}, we note

\begin{lemma}[Klartag \cite{Kl}, a combination of Lemma 5.4 and Corollary 5.3]\label{klartag1}
If $\mu$ on $\R^n$ is an isotropic log-concave probability measure with density $e^{-W},$ then for a sufficiently large absolute constant $\alpha>0,$ the set
$$\{x\in\R^n:\,W(x)\leq W(0)+\alpha n\}$$
a) has measure at least $1-e^{-\alpha n/8};$\\
b) contains the euclidean ball of radius $0.1$. 
\end{lemma}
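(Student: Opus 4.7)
Since the lemma is stated as a combination of two results from Klartag's paper on isotropic log-concave densities, the plan is to address parts (a) and (b) separately using the structure theory of such measures.

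For part (a), note that $\Omega_\alpha := \{W \le W(0) + \alpha n\}$ is convex as a sublevel set of the convex function $W$. I would first establish the mean bound $\int (W - W(0))\, d\mu = O(n)$, combining the entropy identity $\int W\, d\mu = -\int f\log f$ with the Gaussian comparison bound on the differential entropy of an isotropic density ($\le \tfrac{n}{2}\log(2\pi e)$) and Klartag's universal density bounds $c^n \le f(0) \le C^n$ for isotropic log-concave $f$. Jensen's inequality, via isotropy (which forces the barycenter at zero), ensures $W(0) \le \int W\, d\mu$, and the two-sided bound on $f(0)$ yields the pointwise estimate $(W(0) - W)_+ \le C'n$. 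From these, Markov's inequality gives $\mu(\Omega_\alpha^c) = O(1/\alpha)$; applying Borell's one-dimensional lemma to the nested family of convex sublevel sets of $W$ then boosts this polynomial bound to the desired exponential decay $\mu(\Omega_\alpha^c) \le e^{-\alpha n / 8}$, with the constant $1/8$ arising from the standard normalization in Borell's proof.

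For part (b), I would argue by contradiction using convex separation combined with a universal bound on one-dimensional marginals. Since $0 \in \Omega_\alpha$ trivially, if some $x_0 \in B(0, 0.1)$ satisfied $W(x_0) > W(0) + \alpha n$, then by convexity of $\Omega_\alpha$ and the Hahn-Banach theorem there would exist a unit vector $\theta$ with $\Omega_\alpha \subset \{x \in \R^n : \langle x, \theta\rangle \le h\}$ for some $h \le |x_0| \le 0.1$. The pushforward of $\mu$ by $x \mapsto \langle x, \theta\rangle$ is a one-dimensional isotropic log-concave density $g$: by Fradelizi's theorem $g$ is uniformly bounded at its mode by an absolute constant, while Grunbaum's inequality gives $g((-\infty, 0]) \le 1 - e^{-1}$. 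Combining these yields $\mu(\{\langle x, \theta\rangle \le 0.1\}) \le (1 - e^{-1}) + 0.1 \cdot C < 1$ for universal constants, which contradicts $\mu(\Omega_\alpha) \ge 1 - e^{-\alpha n/8}$ as soon as $\alpha n$ exceeds a fixed absolute threshold (for small $n$ the lemma can be verified directly by a Lipschitz-type argument on $W$ using the same density bounds).

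The main obstacle is the numerical threshold in part (b): calibrating the specific radius $0.1$ against the universal constants from Fradelizi and Grunbaum is a bookkeeping task but requires a careful choice of $\alpha$. Part (a) is essentially routine given the standard entropy and concentration toolbox for isotropic log-concave measures. An alternative route to (b) would be to invoke directly Klartag's bound on the oscillation of the density of an isotropic log-concave measure on a small Euclidean neighborhood of the origin, namely $f(x)/f(0) \ge e^{-Cn}$ for $|x|$ below a universal constant, which bypasses the separation argument entirely.
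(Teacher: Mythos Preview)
The paper does not supply its own proof of this lemma: it is quoted verbatim as a combination of Lemma~5.4 and Corollary~5.3 from Klartag \cite{Kl}, with no argument given. So there is no ``paper's proof'' to compare against. I will nonetheless comment on your sketch.

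Your treatment of part (b) is fine. The separation-plus-marginal argument works, and the alternative you mention at the end---Klartag's direct bound $f(x)/f(0)\ge e^{-Cn}$ on a small ball around the barycenter---is in fact the standard way (b) is proved.

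Part (a), however, has a real gap at the Borell step. From Markov you correctly get $\mu(\Omega_{t_0})\ge 3/4$ for some $t_0=O(n)$, but the upgrade to exponential decay via ``Borell's lemma applied to the nested family of sublevel sets'' does not go through. Convexity of $W$ yields only the inclusion $\Omega_{st_0}\subset s\,\Omega_{t_0}$ for $s\ge 1$ (check: if $y\in\Omega_{st_0}$ then $W(y/s)\le\tfrac{1}{s}W(y)+(1-\tfrac{1}{s})W(0)\le W(0)+t_0$). Consequently $(s\,\Omega_{t_0})^c\subset\Omega_{st_0}^c$, so Borell's upper bound on $\mu\bigl((s\,\Omega_{t_0})^c\bigr)$ controls the \emph{smaller} complement and says nothing about $\mu(\Omega_{st_0}^c)$. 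The log-concavity of $t\mapsto\mu(\Omega_t)$ (which does hold, via Pr\'ekopa--Leindler) is likewise of no help: it does not govern the rate at which $\mu(\Omega_t)\to 1$. Note also that $\Omega_{t_0}$ need not be symmetric, so even the hypothesis of the usual Borell lemma is not met.

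A route that does work is a direct ray-wise comparison. On each ray, log-concavity of $r\mapsto f(r\theta)$ together with $f(r_\theta\theta)=f(0)e^{-\alpha n}$ forces $f(r\theta)\ge f(0)e^{-\alpha n r/r_\theta}$ on $[0,r_\theta]$ and $f(r\theta)\le f(0)e^{-\alpha n r/r_\theta}$ on $[r_\theta,\infty)$, whence
\[
\frac{\int_{r_\theta}^\infty r^{n-1}f(r\theta)\,dr}{\int_0^{r_\theta} r^{n-1}f(r\theta)\,dr}
\;\le\;
\frac{\int_1^\infty s^{n-1}e^{-\alpha n s}\,ds}{\int_0^{1} s^{n-1}e^{-\alpha n s}\,ds}
\;=\;\frac{\Gamma(n,\alpha n)}{\gamma(n,\alpha n)}.
\]
A standard large-deviation bound for the Gamma law gives $\Gamma(n,\alpha n)/\Gamma(n)\le e^{-n(\alpha-1-\log\alpha)}$, which is at most $e^{-\alpha n/8}$ once $\alpha$ exceeds a fixed absolute constant; integrating over $\theta\in\mathbb{S}^{n-1}$ then yields (a).
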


Next, let us recall a nice and useful Lemma 2.4 from Klartag, E. Milman \cite{LogLapl}, which we slightly modify by introducing another parameter $\lambda$, and thus outline the proof. Recall, for a symmetric convex set $K,$ we define the polar set
$$K^o=\{x\in\R^n:\,\forall y\in K,\,\langle x,y\rangle\leq 1\}.$$

\begin{lemma}[Klartag, E. Milman \cite{LogLapl}, a modification of Lemma 2.4]\label{klartag2}
	Let $W$ be an even convex function from $C^1(\R^n)$. For any $r,q>0$ and any $\lambda\in [0,1]$, one has
	$$\nabla W\left((1-\lambda) \{W\leq q+W(0)\}\right)\subset \left(\frac{1-\lambda}{\lambda}q+r\right)\{W\leq r+W(0)\}^o.$$
\end{lemma}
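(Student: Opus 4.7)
My plan is to follow the same general strategy as in the original Klartag--E.~Milman Lemma 2.4, but with a $\lambda$-dependent choice of convex combination. The polar-body inclusion in the statement is equivalent to the scalar inequality
\[
\langle \nabla W(z),\, y\rangle \leq \frac{1-\lambda}{\lambda}q + r
\]
for every $z = (1-\lambda) x'$ with $W(x') \leq q + W(0)$ and every $y$ with $W(y) \leq r + W(0)$, so I would first reduce the problem to proving this pointwise bound.

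The key step will be to apply the subgradient inequality for the convex function $W$ at a cleverly chosen auxiliary point, namely $z + \lambda y$. This gives
\[
\lambda \langle \nabla W(z), y\rangle \;=\; \langle \nabla W(z), (z+\lambda y)-z\rangle \;\leq\; W(z + \lambda y) - W(z).
\]
The crucial observation that makes this choice effective is that $z + \lambda y = (1-\lambda) x' + \lambda y$ is itself a convex combination of $x'$ and $y$, so a second application of convexity of $W$ yields
\[
W(z + \lambda y) \leq (1-\lambda) W(x') + \lambda W(y) \leq (1-\lambda)q + \lambda r + W(0).
\]
Combining the two displays, I get $\lambda \langle \nabla W(z), y\rangle \leq (1-\lambda)q + \lambda r + \bigl(W(0) - W(z)\bigr)$.

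To finish I would use that $W$ is both convex and even, which forces $0$ to be a global minimizer: indeed $W(0) = W\bigl(\tfrac{1}{2}x + \tfrac{1}{2}(-x)\bigr) \leq \tfrac{1}{2}(W(x) + W(-x)) = W(x)$, hence $W(z) \geq W(0)$. Dropping the nonpositive term $W(0) - W(z)$ and dividing by $\lambda$ delivers exactly the desired bound. The boundary cases $\lambda = 0$ and $\lambda = 1$ are handled separately and are trivial: in the former the prefactor $\frac{1-\lambda}{\lambda}q + r$ is formally $+\infty$, while in the latter the set on the left reduces to $\nabla W(\{0\}) = \{0\}$, which is contained in every dilate of the polar. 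I do not anticipate a real obstacle; the only piece of cleverness is choosing precisely $\lambda y$ (not $y$ and not some other multiple), which is what makes the two uses of convexity interlock and produce the asymmetric coefficients $\tfrac{1-\lambda}{\lambda}q$ and $r$ on the right.
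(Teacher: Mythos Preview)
Your proof is correct and follows essentially the same approach as the paper's: both arguments apply the subgradient inequality at a point of $(1-\lambda)\{W\le q+W(0)\}$ in the direction $\lambda$ times a point of $\{W\le r+W(0)\}$, then use convexity to bound $W$ at the resulting convex combination, and finally discard a nonpositive term using $W\ge W(0)$ (from evenness plus convexity). The only cosmetic difference is the order of operations---the paper adds the nonnegative quantity $W(x)-W(0)$ at the outset, whereas you drop $W(0)-W(z)\le 0$ at the end---but the two are identical in substance.
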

\begin{proof} Pick any $z\in \{W\leq r+W(0)\}$ and $x\in (1-\lambda) \{W\leq q+W(0)\}.$ We write, in view of the fact that $W$ is even and thus $W(x)-W(0)\geq 0:$
$$\langle \nabla W(x),\lambda z\rangle\leq W(x)-W(0)+\langle \nabla W(x),\lambda z\rangle\leq W(x+\lambda z)-W(0)\leq$$ 
$$ (1-\lambda)\left(W\left(\frac{x}{1-\lambda}\right)-W(0)\right)+\lambda (W(z)-W(0)),$$
where in the last passage we used convexity. Dividing both sides by $\lambda$, and using the choice of $x$ and $z,$ we see
$$\langle \nabla W(x),z\rangle\leq \frac{1-\lambda}{\lambda}q+r,$$
which finishes the proof in view of the definition of duality.
\end{proof}

Next, combining Lemmas \ref{klartag1} and \ref{klartag2}, we get

\begin{cor}\label{klartag-comb}
Let $\mu$ on $\R^n$ be an isotropic log-concave even measure with $C^1$ density $e^{-W}.$ There exists a symmetric convex set $A\subset\R^n$ such that\\
a) $\mu(A)\geq 0.9;$\\
b) For any $x\in A$ we have $|\nabla W|\leq C_1 n^2.$\\
Here $C, C_1$ are absolute constants.
\end{cor}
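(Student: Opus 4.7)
The plan is to take $A$ to be a dilate of the convex symmetric body $B := \{W \leq W(0) + \alpha n\}$, where $\alpha$ is the absolute constant from Lemma \ref{klartag1} (enlarged, if necessary, so that $e^{-\alpha/8} < 10^{-2}$). By Lemma \ref{klartag1}, $B$ is symmetric convex, $\mu(B) \geq 1 - e^{-\alpha n/8} \geq 0.99$ for all $n \geq 1$, and $B \supset B(0, 0.1)$, whence $B^o \subset B(0, 10)$.

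I would then set $A := (1-\lambda) B$ for $\lambda = c/n$ with a small absolute constant $c \in (0,1)$ to be fixed at the end; as a dilate of a symmetric convex body, $A$ is itself symmetric convex. Part (b) then follows directly from Lemma \ref{klartag2} applied with $r = q = \alpha n$: for every $x \in A$,
$$\nabla W(x) \in \left(\frac{1-\lambda}{\lambda} \alpha n + \alpha n\right) B^o = \frac{\alpha n}{\lambda} B^o \subset \frac{10 \alpha n}{\lambda}\, B_2^n = \frac{10 \alpha}{c}\, n^2\, B_2^n,$$
giving $|\nabla W(x)| \leq C_1 n^2$ with $C_1 = 10\alpha/c$.

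For part (a), the plan is to invoke the elementary dilation inequality
$$\mu(t B) \geq t^n \mu(B), \quad t \in [0,1],$$
valid for any even log-concave $\mu$ and any Borel set $B$. This is an immediate consequence of the pointwise bound $W(ty) \leq tW(y) + (1-t) W(0) \leq W(y)$, which uses convexity of $W$ together with the fact that evenness forces $W(0) = \min W$, followed by the change of variables $x = ty$ in the integral definition of $\mu(tB)$. Combining this with Bernoulli's inequality $(1 - c/n)^n \geq 1 - c$ for $c \in [0,1]$ and $n \geq 1$, we get $\mu(A) \geq (1-c)(1 - e^{-\alpha n/8}) \geq 0.99(1-c)$, and the concrete choice $c = 0.05$ already yields $\mu(A) \geq 0.94 \geq 0.9$.

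The only delicate point is the parameter balancing between (a) and (b): the $n^2$ bound in (b) forces $\lambda \gtrsim 1/n$, while (a) forces $(1-\lambda)^n$ to stay bounded below by a positive constant, so the scale $\lambda \asymp 1/n$ is the unique sweet spot where both requirements survive — precisely because $(1 - c/n)^n$ remains close to $1$ for small $c$, and this is what makes the dilation inequality $\mu(tB) \geq t^n \mu(B)$ strong enough at the critical scale.
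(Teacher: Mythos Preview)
Your proof is correct and follows essentially the same route as the paper: define $A$ as a $(1-\lambda)$-dilate of the sublevel set $B=\{W\le W(0)+\alpha n\}$, use Lemma~\ref{klartag2} with $q=r=\alpha n$ together with $B\supset 0.1\,B_2^n$ (hence $B^o\subset 10\,B_2^n$) for part~(b), and the dilation inequality $\mu(tB)\ge t^n\mu(B)$ for even log-concave $\mu$ for part~(a). Your parameter choice $\lambda=c/n$ with $c=0.05$ is in fact more careful than the paper's $\lambda=1/n$: with the latter one only gets $\mu(A)\ge (1-1/n)^n\mu(B)\to e^{-1}\mu(B)$, which does not literally give $0.9$, whereas your use of Bernoulli's inequality $(1-c/n)^n\ge 1-c$ cleanly secures $\mu(A)\ge 0.99(1-c)\ge 0.9$.
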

\begin{proof} We let 
$$A=\frac{n-1}{n}\{W(x)\leq W(0)+\alpha n\},$$ 
with $\alpha>0$ chosen to be a sufficiently large constant. Then, since $W$ is ray-decreasing,
$$\mu(A)\geq \left(1-\frac{1}{n}\right)^n \mu\left(W(x)\leq W(0)+\alpha n\right)\geq 0.9,$$
where in the last step we use a) of Lemma \ref{klartag1}. Thus a)  follows. Next, to get b), we apply Lemma \ref{klartag2} with $\lambda=\frac{1}{n}$, $q=r=\alpha n,$ to get that 
$$\nabla W(A)\subset C'n^2 \{W(x)\leq W(0)+\alpha n\}^o\subset C'' n^2 B^n_2,$$
where in the last step we used b) from Lemma \ref{klartag1}, together with the fact that polarity reverses inclusions. This finishes the proof of b).
\end{proof}

\begin{remark} Arguing along the lines of Section 3, one may show that the set $A$ from Corollary \ref{klartag-comb} has the property that for any symmetric convex set $K,$ we have $\mu(K\cap A)\geq 0.5\mu(K).$
\end{remark}

\textbf{Proof of the Theorem \ref{log-concave-general}.} Note that for any linear operator $T,$ and any pair of convex sets $K$ and $L$, one has $T(K+L)=TK+TL.$ Also, we may assume that $K$ is a $C^2-$smooth strictly-convex bounded set, and in particular, there exists a linear operator pushing forward the restriction of $\mu$ onto $K$ into the isotropic position. Therefore, without loss of generality, we may assume that the measure $\mu|_K=\frac{1}{\mu(K)} 1_K(x) e^{-V(x)} dx$ is isotropic. We may also assume without loss of generality that the density of $\mu$ is $C^1-$smooth. It suffices to show that $p^s_{\mu}(K)\geq n^{-4-o(1)}$ in this situation. 

By the recent result of Chen \cite{KLS} (which built up on the work of Lee-Vempala \cite{LV} and Eldan \cite{Eldan}), we have $C_{poin}(\mu,K)\leq n^{o(1)}.$

Using the fact that $\langle \nabla V,x\rangle\geq 0$ for any even convex function $V$, and applying the Proposition \ref{prop2} with the set $A$ from Corollary \ref{klartag-comb}, we get, for any $u\in W^{2,2}(K),$
\begin{equation}\label{getineq}
	\frac{1}{\mu(K)}\int_K \|\nabla^2 u\|^2 d\mu\geq n^{-4-o(1)} \left(
	\frac{1}{\mu(A)}\int_A Lu\,d\mu\right)^2.
\end{equation}

Recall (see e.g. Theorem 2.11 in \cite{gal2}), that for any $f\in C^1(\partial K)$ there exists a $u\in C^2(K)\cap W^{1,2}(K)$ with $\langle \nabla u,n_x\rangle=f(x)$ on $x\in\partial K$, and such that $Lu=C$, with $C=\frac{\int_{\partial K} f d\mu|_{\partial K}}{\mu(K)}$. Note that $Var(Lu)=Var(C)=0$, and also note that, by convexity of $V,$ we have $\langle \nabla^2 V\nabla u,\nabla u\rangle\geq 0$. Therefore, we get from (\ref{getineq}):
$$\frac{1}{\mu(K)}\int \|\nabla^2 u\|^2+\langle \nabla^2 V\nabla u,\nabla u\rangle d\mu\geq p\left(\int Lu\right)^2+Var(Lu)$$
for $p= n^{-4-o(1)}.$ An application of Proposition \ref{key_prop} concludes the proof. $\square$

\medskip

\subsection{Proof of the Theorems \ref{product} and \ref{rot-inv}.}

Before proceeding with the proof of Theorem \ref{product}, we outline the following corollary of a result by Eskenazis, Nayar, Tkocz \cite{ENT}. Recall that a function $f(x)$ is called unimodular if 
$$f(x)=\int_0^{\infty} 1_{K_t}(x)d\nu(t),$$
for some measure $\nu$ on $[0,\infty)$ and some collection of convex symmetric sets $K_t.$ In particular, any even log-concave function is unimodular.

\begin{lemma}\label{unimod}
For any symmetric convex body $K$, any $p\in [1,2]$ and for any $q>0,$ letting the probability measure $d\mu_p(x)=e^{-\frac{\|x\|_p}{p}} dx$, we have
$$\int_K \|x\|^q_q d\mu_p(x)\leq C(p,q) n \mu_p(K),$$
for some constant $C(p,q)$ which depends only on $p$ and $q.$
\end{lemma}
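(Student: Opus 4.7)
The plan is to reduce to a one-dimensional estimate via a coordinate decomposition and a Gaussian-correlation-type inequality from \cite{ENT}, and then to bound the resulting unconditional one-dimensional moment by a direct calculation.

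First, writing $\|x\|_q^q=\sum_{i=1}^n|x_i|^q$, I split
$$\int_K \|x\|_q^q\, d\mu_p=\sum_{i=1}^n \int_K |x_i|^q\, d\mu_p,$$
so it suffices to bound each single-coordinate integral by $C(p,q)\mu_p(K)$ and pick up the factor of $n$ upon summation.

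For each $i$, the layer-cake identity gives $\int_K |x_i|^q\, d\mu_p = q\int_0^\infty t^{q-1}\mu_p(K\cap\{|x_i|>t\})\,dt$. The slab $\{|x_i|\le t\}$ is symmetric convex, and so is $K$. The central input is the correlation-type inequality from \cite{ENT} (Remark 33) valid for the class of log-concave measures to which $\mu_p$ with $p\in[1,2]$ belongs: for all symmetric convex $A,B\subseteq\R^n$,
$$\mu_p(A\cap B)\ge \mu_p(A)\mu_p(B).$$
Applied to $A=K$ and $B=\{|x_i|\le t\}$, and then passed to the complement, this yields $\mu_p(K\cap\{|x_i|>t\})\le \mu_p(K)\mu_p(\{|x_i|>t\})$. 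Substituting back and integrating in $t$:
$$\int_K |x_i|^q\, d\mu_p \le \mu_p(K)\int_{\R^n} |x_i|^q\, d\mu_p.$$

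Summing over $i$ and using the coordinate-permutation invariance of $\mu_p$ (which makes the right-hand integral the same for each $i$), I obtain the desired inequality provided that the single-coordinate moment
$$M(p,q):=\int_{\R^n} |x_1|^q\, d\mu_p$$
is bounded by a quantity $C(p,q)$ independent of $n$. I would handle this last step by an explicit computation of the marginal density of $x_1$ under $\mu_p$, using $\ell^p$-polar coordinates in the spirit of Lemma \ref{maybe} and exploiting the Gaussian-mixture representation of $e^{-|t|^r}$, $r\in(0,2]$, that underpins the ENT framework.

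The main obstacle is verifying the applicability of the ENT correlation inequality to $\mu_p$ in the interior range $p\in(1,2)$, where $\mu_p$ is neither a product measure (as at $p=1$) nor rotation-invariant (as at $p=2$) and where both structural features that usually give correlation fail simultaneously; the Gaussian-mixture viewpoint of \cite{ENT} is what bridges these two endpoints. Once the correlation inequality is in hand, the remaining one-dimensional moment bound is routine bookkeeping.
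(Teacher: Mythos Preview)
Your approach is essentially the paper's: both hinge on the ENT correlation inequality for $\mu_p$ combined with a layer-cake representation to deduce $\int_K \|x\|_q^q\,d\mu_p \le \mu_p(K)\int_{\R^n}\|x\|_q^q\,d\mu_p$, after which the full-space moment is bounded by $C(p,q)n$. The only cosmetic difference is that you split $\|x\|_q^q=\sum_i |x_i|^q$ and apply correlation to slabs coordinate-by-coordinate, whereas the paper applies the same layer-cake/correlation argument (the Barthe--Klartag trick) directly to $\|x\|_q^q$, whose sub-level sets are the symmetric convex $\ell^q$-balls; your decomposition is unnecessary but harmless.

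One clarification on your ``main obstacle'': the density in the lemma (and in Theorem~\ref{product}) is intended to be $C_{n,p}e^{-\|x\|_p^p/p}=\prod_i c_p e^{-|x_i|^p/p}$ (the exponent $\|x\|_p$ in the statement is a typo, as the computations $|\nabla V|^2=\|x\|_{2(p-1)}^{2(p-1)}$ and $\langle\nabla V,x\rangle=\|x\|_p^p$ in the proof of Theorem~\ref{product} confirm). Thus $\mu_p$ \emph{is} a product measure for every $p$, each one-dimensional factor $c_p e^{-|t|^p/p}$ is a classical Gaussian scale mixture for $p\in(0,2]$, and the ENT correlation inequality applies directly via Royen's theorem; there is no gap to bridge between the endpoints $p=1$ and $p=2$. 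This also makes your final moment bound immediate: $M(p,q)=\int_{\R}|t|^q c_p e^{-|t|^p/p}\,dt$ is a one-dimensional integral depending only on $p,q$.
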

\begin{proof} Firstly, recall that
\begin{equation}\label{makeref111}
\int_{\R^n}\|x\|^q_q d\mu_p(x)\leq C(p,q)n,
\end{equation}
as follows from Fubini's theorem together with the one-dimensional version of Lemma \ref{maybe}.

Eskenazis, Nayar, Tkocz \cite{ENT} (see also Theorem 19 in \cite{BK}) showed that for any pair of unimodular functions $f$ and $g$,
$$\int_{\R^n} e^{-f}e^{-g}d\mu_p(x)\geq \left(\int_{\R^n} e^{-f}d\mu_p(x)\right)\left(\int_{\R^n} e^{-g}d\mu_p(x)\right).$$
As was noticed by Barthe and Klartag (equation (15) in \cite{BK}) via a classical trick, this implies that 
$$\int_{\R^n} fe^{-g}d\mu_p(x)\leq \left(\int_{\R^n} fd\mu_p(x)\right)\left(\int_{\R^n} e^{-g}d\mu_p(x)\right).$$
We plug the even log-concave (thus, in particular, unimodular) functions $f(x)=\|x\|^q_q$ and $g(x)=-\log 1_K(x)$ into the above inequality, use (\ref{makeref111}), and the lemma follows.
\end{proof}

\medskip

\textbf{Proof of Theorem \ref{product}.} Without loss of generality, let $K$ to be a symmetric $C^2$-smooth convex body. Consider the measure $d\mu_p(x)=e^{-\frac{\|x\|_p}{p}}dx$, for $p\in [1,2]$. Note that in this case,
$$|\nabla V(x)|^2=\|x\|_{2(p-1)}^{2(p-1)},$$
and
$$\langle \nabla V,x\rangle=\|x\|^p_p.$$
It follows from Theorem 1 from Barthe and Klartag \cite{BK} that for any convex set $K,$ the Poincar\'e constant of the restriction of $\mu_p$ on $K$ is bounded from above by $C(\log n)^{\frac{2-p}{2p}}.$ Therefore, denoting $\int=\frac{1}{\mu_p(K)}\int_K d\mu_p$ (as before), we get, by Proposition \ref{prop2}:
$$\int \|\nabla^2 u\|^2\geq \frac{(\int Lu)^2}{n+C (\log n)^{\frac{2-p}{p}}\int \|x\|_{2(p-1)}^{2(p-1)}-2\|x\|^p_p}.$$
By Lemma \ref{unimod}, 
$$\frac{1}{\mu_p(K)}\int_K \|x\|_{2(p-1)}^{2(p-1)} d\mu_p\leq C(p)n,$$
and thus 
$$\int \|\nabla^2 u\|^2\geq \frac{(\int Lu)^2}{C(p)n (\log n)^{\frac{2-p}{p}}}.$$
Recall (see e.g. Theorem 2.11 in \cite{gal2}), that for any $f\in C^1(\partial K)$ there exists a $u\in C^2(K)\cap W^{1,2}(K)$ with $\langle \nabla u,n_x\rangle=f(x)$ on $x\in\partial K$, and such that $Lu=C$, with $C=\frac{\int_{\partial K} f d\mu_p|_{\partial K}}{\mu_p(K)}$. With this choice of $Lu,$ as before, we get
$$\frac{1}{\mu(K)}\int_K \|\nabla^2 u\|^2+\langle \nabla^2 V\nabla u,\nabla u\rangle\geq p^s(K,\mu)\left(\int Lu\right)^2+Var(Lu)$$
for $p^s(K,\mu)= \frac{1}{Cn (\log n)^{\frac{2-p}{p}}}.$ An application of Proposition \ref{key_prop} concludes the proof. $\square$

\medskip
\medskip

Lastly, we show \textbf{the proof of Theorem \ref{rot-inv}.} For $d\mu(x)=e^{-\frac{|x|^p}{p}}dx,$ we note that $V=\frac{|x|^p}{p}+C,$ and thus $\nabla V=|x|^{p-2}x$. By Lemma \ref{moments}, for $R>0$ such that $\mu(R B^n_2)=\mu(K),$ we have
$$\frac{1}{\mu(K)}\int_K |x|^{2p-2} d\mu\leq \frac{1}{\mu(RB^n_2)}\int_{RB^n_2} |x|^{2p-2} d\mu\leq C(p)n^{\frac{2p-2}{p}},$$
where in the last passage we used Lemma \ref{maybe}. 

Next, note that a scaling of $n^{\frac{1}{2}-\frac{1}{p}}$ brings $\mu$ to an isotropic position. Therefore, by the second part of the Theorem \ref{KLS}, 
\begin{equation}\label{KLS-justnow}
C_{poin}(\mu, K)\leq Cn^{\frac{1}{2}}\cdot n^{\frac{1}{p}-\frac{1}{2}}=Cn^{\frac{1}{p}}.
\end{equation}

Therefore, in this case, Proposition \ref{prop2} combined with Theorem \ref{KLS} yields 
$$\int \|\nabla^2 u\|^2\geq \frac{(\int Lu)^2}{n+Cn^{\frac{2}{p}}\int |x|^{2p-2}-2|x|^p}\geq C'n^{-2} \left(\int Lu\right)^2.$$ 
The result now follows from the Proposition \ref{key_prop} in the same manner as before. $\square$

\begin{remark}\label{remark-refer} In the case when $p\in [1,2]$, Remark 33 from Eskenazis, Nayar, Tkocz \cite{ENT} indicates that, similarly to the case of the product measures, for any pair of unimodular functions $f$ and $g$,
$$\int_{\R^n} e^{-f}e^{-g}d\mu(x)\geq \left(\int_{\R^n} e^{-f}d\mu(x)\right)\left(\int_{\R^n} e^{-g}d\mu(x)\right),$$
with $d\mu(x)=e^{-\frac{|x|^p}{p}}dx.$ As was noted by Barthe and Klartag \cite{BK}, this implies that for such $\mu,$ for any symmetric convex set, $C_{poin}(\tilde{\mu},K)\leq c\Phi_{KLS}$, where $\tilde{\mu}$ is the ``isotropic dilate'' of $\mu,$ and $\Phi_{KLS}$ is the KLS constant, which was later shown \cite{KLS} to be bounded by $n^{o(1)}.$ In summary, in place of (\ref{KLS-justnow}) (which followed from Theorem \ref{KLS}), we have
$$C_{poin}(\mu, K)\leq n^{\frac{1}{p}-\frac{1}{2}+o(1)}.$$
This shows, with the same argument as above, that when $p\in [1,2]$ and $d\mu(x)=e^{-\frac{|x|^p}{p}}dx$, one has, for all symmetric convex sets $K$ and $L$ and any $\lambda\in [0,1],$
$$\mu(\lambda K+(1-\lambda)L)^{\frac{1}{n+o(1)}}\geq \lambda \mu(K)^{\frac{1}{n+o(1)}}+(1-\lambda)\mu(L)^{\frac{1}{n+o(1)}}.$$
This implies the ``furthermore'' part of Theorem \ref{rot-inv}.
\end{remark}

\section{Proof of Theorem \ref{meas-simple}.}

Throughout the section, fix a symmetric convex body $K$ and an even log-concave probability measure $\mu$ on $\R^n.$ Recall the notation $\int =\frac{1}{\mu(K)}\int_K d\mu.$

\begin{proposition}\label{prop1}
Let $u\in W^{2,2}(K,\mu)\cap C^2(K)$ be an even function. Then
$$\int tr\left(\nabla^2 u (\nabla^2V)^{-1}\nabla^2 u\right)\geq \int |\nabla u|^2+\frac{(\int Lu)^2}{\int LV}.$$
\end{proposition}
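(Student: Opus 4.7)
The plan is to use the ``affine shift'' trick of Proposition \ref{prop2}, but shifting by $tV$ rather than by $t\tfrac{|x|^2}{2}$. For a real parameter $t$ to be chosen later, I write $u = v + tV$. Because $\mu$ is even, $V$ is even up to an irrelevant additive constant, and since $u$ is even this forces $v$ to be even as well. Each $\partial_i v$ is then odd, so by the symmetry of $K$ and evenness of $\mu$ one has $\int \partial_i v = 0$. Applying the Brascamp-Lieb inequality for convex sets (\ref{BrLi-convex}) to each $\partial_i v$ and summing in $i$ yields
$$\int tr\bigl(\nabla^2 v\,(\nabla^2 V)^{-1}\nabla^2 v\bigr)\geq \int |\nabla v|^2.$$

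The next step is to re-express both sides in terms of $u$ and $V$. The identity $(\nabla^2 V)^{-1}\nabla^2 V = I$ gives, after expansion,
$$\int tr\bigl(\nabla^2 v\,(\nabla^2 V)^{-1}\nabla^2 v\bigr) = \int tr\bigl(\nabla^2 u\,(\nabla^2 V)^{-1}\nabla^2 u\bigr) - 2t\int \Delta u + t^2\int \Delta V,$$
while expanding the squared gradient gives
$$\int |\nabla v|^2 = \int |\nabla u|^2 - 2t\int\langle \nabla u,\nabla V\rangle + t^2\int|\nabla V|^2.$$
Substituting these into the Brascamp-Lieb inequality and regrouping by the definitions $Lu = \Delta u - \langle\nabla u,\nabla V\rangle$ and $LV = \Delta V - |\nabla V|^2$, the cross and quadratic $t$-terms collect into $2t\int Lu$ and $-t^2\int LV$ respectively, leaving
$$\int tr\bigl(\nabla^2 u\,(\nabla^2 V)^{-1}\nabla^2 u\bigr) \geq \int |\nabla u|^2 + 2t\int Lu - t^2\int LV\qquad\forall t\in\R.$$

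Finally I would optimize in $t$. Since this holds for every $t$, finiteness of the left-hand side automatically forces $\int LV\geq 0$. When $\int LV > 0$, choosing $t = \tfrac{\int Lu}{\int LV}$ collapses the $t$-dependent part into $\tfrac{(\int Lu)^2}{\int LV}$, which is precisely the target bound; in the degenerate case $\int LV=0$ the displayed inequality also forces $\int Lu=0$, so the claim reduces to the standard Brascamp-Lieb bound $\int tr(\nabla^2 u (\nabla^2 V)^{-1}\nabla^2 u) \geq \int |\nabla u|^2$.

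The only real obstacle is the algebraic bookkeeping — verifying that the quadratic-in-$t$ coefficient collapses to $-\int LV$ and the linear-in-$t$ coefficient to $2\int Lu$. Structurally this is the same ``complete the square'' maneuver used in the proof of Proposition \ref{prop2}, with $V$ now playing the role that $\tfrac{|x|^2}{2}$ plays in the Gaussian predecessor (Proposition 6.3 of \cite{gal2}).
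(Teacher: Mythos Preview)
Your proposal is correct and follows essentially the same approach as the paper: write $u=v+tV$, apply Brascamp--Lieb (\ref{BrLi-convex}) coordinate-wise to $\nabla v$ (using that $\int\nabla v=0$ by evenness), expand, and optimize in $t$. The paper arrives at the same one-parameter inequality $\int tr(\nabla^2 u (\nabla^2 V)^{-1}\nabla^2 u)\geq \int|\nabla u|^2+2t\int Lu-t^2\int LV$ via the equivalent route of substituting $\Delta v=\langle\nabla V,\nabla v\rangle+Lu-tLV$ rather than expanding both sides in $u$, but the content is identical; your added remark on the degenerate case $\int LV=0$ is a nice touch the paper omits.
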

\begin{proof} We write $u=v+tV,$ for some $t\in\R.$ Then
\begin{equation}\label{tr-change}
tr\left(\nabla^2 u (\nabla^2V)^{-1}\nabla^2 u\right)=tr\left(\nabla^2 v(\nabla^2V)^{-1}\nabla^2 v\right)+2t\Delta v+t^2 \Delta V,
\end{equation}
and $Lu=Lv+tLV.$ Consequently,
\begin{equation}\label{lapl-v-1}
\Delta v=\langle \nabla V,\nabla v\rangle+Lu-tLV.
\end{equation}
Since $u$ is even, we have that $v$ is also even, and thus, by the symmetry of $K$ and the evenness of $\mu$, we have $\int \nabla v=0$. Therefore, by (\ref{tr-change}), (\ref{lapl-v-1}) and the Brascamp-Lieb inequality (\ref{BrLi}) applied coordinate-wise to $\nabla v,$ we get 
$$\int tr\left(\nabla^2 u (\nabla^2V)^{-1}\nabla^2 u\right)\geq$$
\begin{equation}\label{step1-1}
 \int |\nabla v|^2+2t(\langle\nabla V,\nabla v\rangle+Lu-tLV)+t^2 \Delta V=\int |\nabla u|^2+2t Lu-t^2LV.
\end{equation}
Plugging the optimal 
$$t=\frac{\int Lu}{\int LV},$$
we get the estimate.

\end{proof}

\medskip

\textbf{Proof of the Theorem \ref{meas-simple}.} Suppose $\nabla^2 V\geq k_1\rm{Id}$ on $K$. Then 
$$tr\left(\nabla^2 u (\nabla^2V)^{-1}\nabla^2 u\right)\leq \frac{\|\nabla^2 u\|^2}{k_1},$$
and therefore, by Proposition \ref{prop1},
\begin{equation}\label{step}
\int \|\nabla^2 u\|^2+\langle \nabla^2 V\nabla u,\nabla u\rangle\geq \int\langle \nabla^2 V\nabla u,\nabla u\rangle+\frac{k_1(\int Lu)^2}{\int LV }.
\end{equation}
As $V$ is convex, we have $\langle \nabla^2 V\nabla u,\nabla u\rangle\geq 0,$ and therefore, (\ref{step}) together with Proposition \ref{key_prop} implies
$$p_s(\mu,a)\geq \frac{k_1}{\inf_{K:\,\mu(K)\geq a} \left(\frac{1}{\mu(K)}\int_K LV d\mu\right)}.$$
Recall that $\int_{\R^n} LV d\mu=0$ (as is verified via the integration by parts), and therefore, by dominated convergence theorem, we get $p(\mu,a)\rightarrow_{a\rightarrow 1} \infty.$ 
$\square$

\end{document}